\newtheorem{theorem}{Theorem}
\newtheorem{problem}{Problem}
\newtheorem{claim}{}[theorem]
\newtheorem{cor}{Corollary}[theorem]
\newtheorem{lemma}{Lemma}
\newtheorem{conjecture}{Conjecture}
\newtheorem{proposition}{Proposition}
\def \no {\noindent}
 \def \sm {\setminus}
 \def \es {\emptyset}
\newenvironment{proof}[1][]%
{\noindent {\setcounter{equation}{0}\it Proof.
}{#1}{}}{\hfill$\Box$\vspace{2ex}}
\title{Coloring  graph classes with no induced fork via perfect divisibility}
\author{T. Karthick\thanks{Computer Science Unit, Indian Statistical
Institute, Chennai Centre, Chennai 600029, India. This research is partially supported by DST-SERB, Government of India under MATRICS scheme (MTR/2018/000288).} ~~~~ Jenny Kaufmann\thanks{Harvard University, Cambridge, MA 02138. This work was performed when the author was at Princeton University.} ~~~~ Vaidy Sivaraman\thanks{Department of Mathematics and Statistics, Mississippi State University, Mississippi State, MS 39762, USA.}}
\begin{document}
\maketitle

\begin{abstract}
For a graph $G$, $\chi(G)$ will denote its chromatic number, and $\omega(G)$ its clique number.
A graph $G$ is said to be \emph{perfectly divisible} if for all induced subgraphs $H$ of $G$, $V(H)$ can
be partitioned into two sets $A$, $B$ such that $H[A]$ is perfect and $\omega(H[B]) < \omega(H)$. An integer-valued  function $f$ is called a {\it $\chi$-binding function} for a hereditary class of graphs $\cal C$ if $\chi(G) \leq f(\omega(G))$ for every graph $G\in \cal C$.
The \emph{fork} is the graph obtained from the complete bipartite graph $K_{1,3}$ by subdividing an edge once.
The problem of finding a polynomial $\chi$-binding function for the class of fork-free graphs is open.  In this paper, we study the structure of some classes of fork-free graphs; in particular, we study the class of (fork,\,$F$)-free graphs $\cal G$ in the context of perfect divisibility, where $F$ is a graph on five vertices with a stable set of size  three, and show that every $G\in \cal G$ satisfies $\chi(G)\leq \omega(G)^2$. We also note that the class $\cal G$ does not admit a linear $\chi$-binding function.
\end{abstract}

\medskip
\no{\bf Keywords}:  Fork-free graphs; Perfect divisibility; Chromatic number; Clique number.

\section{Introduction}
\underline{}For a positive integer $n$, $K_n$ will denote the complete graph on $n$ vertices, and $P_n$ will denote
the path on $n$ vertices. For an integer $n > 2$, $C_n$ will denote the cycle on $n$ vertices. A \emph{hole} in a graph is an induced cycle $C_n$ with $n > 3$; an \emph{antihole} is the complement of a hole. A hole or antihole is \emph{odd} (\emph{even}) if $n$ is odd (even).  The
\emph{union} of two vertex-disjoint graphs $G_1$ and $G_2$, denoted by $G_1\cup G_2$, is the graph
with vertex set $V(G_1)\cup V(G_2)$ and edge set $E(G_1)\cup E(G_2)$.  The
union of $k$ copies of the same graph $G$ will be denoted by $kG$.  A \emph{stable set} (or an \emph{independent set}) is a set of vertices that are pairwise nonadjacent.

A class of graphs $\cal C$ is \emph{hereditary} if every induced subgraph of every graph in $\cal C$ is also in $\cal C$.
An important and well studied type of hereditary class of graphs is the class of graphs which are defined by forbidden induced subgraphs.  Given a graph $H$, we say that a graph $G$ is \emph{$H$-free} if $G$ has no induced subgraph that is isomorphic to $H$. Given a class of graphs $\cal H$, we say that a graph $G$ is \emph{$\cal H$-free}  if $G$ is $H$-free for every $H\in \cal H$.

For a graph $G$, $\chi(G)$ will denote its chromatic number, and $\omega(G)$ its clique number.
 For every graph $G$, $\chi(G) \geq \omega(G)$. A graph $G$ is called \emph{perfect} if for every induced subgraph $H$ of $G$, $\chi(H) = \omega(H)$.

A graph $G$ is said to be \emph{perfectly divisible} if for all induced subgraphs $H$ of $G$, $V(H)$ can
be partitioned into two sets $A$, $B$ such that $H[A]$ is perfect and $\omega(H[B]) < \omega(H)$. Perfectly divisible graphs were introduced by Ho\'ang \cite{Hoang-2}, and can be thought of as a generalization of perfect graphs in the sense that perfect graphs are perfectly divisible. However not all perfectly divisible graphs are perfect. For example, the vertex set of  an odd hole  can be partitioned into two sets such that the first set induces a perfect graph and the other is a stable set. So an odd hole is  perfectly divisible but not perfect. Ho\'ang \cite{Hoang} observed that the class of $3K_1$-free graphs is perfectly divisible, and in \cite{Hoang-2} he showed that the class of (banner, odd hole)-free graphs is perfectly divisible. Chudnovsky and the third author  showed that the class of ($P_5$, bull)-free graphs is perfectly divisible \cite{CS2018}. See Sections~2 and 4 for more on perfect divisibility.

 A hereditary class of graphs $\cal C$ is \emph{$\chi$-bounded} if there is a function $f$ (called a {\it $\chi$-binding function}) such that $\chi(G) \leq f(\omega(G))$ for every graph $G\in \cal C$. In addition, if $f$ is a polynomial function  then the class $\cal C$ is \emph{polynomially $\chi$-bounded}. It has long been known that there are hereditary graph classes that are not $\chi$-bounded (see  \cite{SS20} for examples) but it is not known whether there is a hereditary graph class that is $\chi$-bounded but not polynomially $\chi$-bounded. A recent survey of Scott and Seymour \cite{SS20}  gives a detailed overview of this area of research.

The class of claw-free graphs (or $K_{1,3}$-free graphs) is widely studied in a variety of contexts and has a vast literature; see \cite{FFR} for a survey.  A detailed and complete structural classification of claw-free graphs has been given by Chudnovsky and Seymour; see \cite{CS}.   A result of Gy\'arf\'as \cite{AG} together with a result of Kim \cite{JHK} show that the class  of claw-free graphs is $\chi$-bounded,  and  that every such graph $G$ satisfies $\chi(G)\leq O(\omega(G)^2/\log \omega(G))$. It is also known that  there is no linear $\chi$-binding function even for a very  special class of  claw-free graphs; see \cite{Brause}. Chudnovsky and Seymour \cite{CSey2010} showed that every connected  claw-free graph $G$ with a stable set of size at least $3$ satisfies  $\chi(G) \leq 2\omega(G)$.

The \emph{fork} is the graph obtained from the complete bipartite graph $K_{1,3}$ by subdividing an edge once. The class of claw-free graphs is a subclass of the class of fork-free graphs. It is a natural line of research to see what properties of claw-free graphs are also enjoyed by fork-free graphs. A classic example is the polynomial-time solvability of the (weighted) stable set problem in the class of fork-free graphs \cite{VEA, LM}, generalizing the result for claw-free graphs \cite{GM,NS}. It has long been known that the class of fork-free graphs is $\chi$-bounded \cite{KP}, and it not known whether the class of fork-free graphs is polynomially $\chi$-bounded or not.   Indeed, Randerath and Schiermeyer \cite{RS} asked the following  interesting question.

\begin{problem}
Does there exist a polynomial $\chi$-binding function for the class of fork-free graphs?
\end{problem}

The third author (unpublished) has conjectured  that the class of fork-free graphs is perfectly divisible which in turn will yield a quadratic $\chi$-binding function. In this paper, we are interested in polynomial $\chi$-binding functions for some classes of fork-free graphs, namely (fork,\,$F$)-free graphs $\cal G$, where $F$ is any nontrivial graph on at most five vertices, and we give below some known results in this direction.

\begin{itemize}\itemsep=0pt

\item If $F=K_3$, then it is observed in \cite{RS} that every $G\in \cal G$ satisfies $\chi(G)\leq 3$. Moreover if $G$ is connected, then equality holds if and only if $G$ is an odd hole.

\item If $F\in \{P_3, \overline{P_3}\}$, then clearly every $G\in \cal G$ is perfect.

\item  If $F\in \{P_4,C_4,K_4,K_4-e, K_3\cup K_1, \mbox{paw} \}$, then  $\cal G$ is linearly $\chi$-bounded; see \cite{CHKK} and the reference therein.

\item  It follows from a result of Wagon \cite{wagon}  that, if $F=2K_2$, then  every $G\in \cal G$ satisfies $\chi(G)\le \binom{\omega(G)+1}{2}$. Further, it is known that $\cal G$ does not admit a linear $\chi$-binding function; see \cite{Brause}.

\item If $F=K_{1,3}$, then every $G\in \cal G$ satisfies $\chi(G)\le O(\omega(G)^2/\log \omega(G))$, and $\cal G$ does not admit a linear $\chi$-binding function; see \cite{AG,JHK}.

\item If $F\in \{P_3\cup K_1, K_2\cup 2K_1\}$, then it follows from Theorem~18 of \cite{RS-Survey} that $\cal G$ is quadratically $\chi$-bounded.

\item  Randerath  \cite{Rand}   showed that, if $F\in \{\overline{P_3\cup 2K_1},K_5-e\}$, then   every $G\in \cal G$ satisfies $\chi(G)\leq \omega(G)+1$.

\item Recently, Chudnovsky et al \cite{CCS} proved a structure theorem for the class of (fork, antifork)-free graphs, and used it to prove that every (fork, antifork)-free graph $G$ satisfies $\chi(G)\leq 2\omega(G)$. (Here, an  \emph{antifork} is the complement graph of a fork.)

\end{itemize}
Thus if $|V(F)|\leq 4$, then the class of (fork,\,$F$)-free graphs is known to be quadratically $\chi$-bounded except when $F=4K_1$, and not much is known when $|V(F)|=5$.    Here, we study the class of (fork,\,$F$)-free graphs, where $F$ is a graph on five vertices with a stable set of size  three. More precisely, we consider the class of (fork,\,$F$)-free graphs $\cal F$, where $F$ is one of the following graphs: $P_6$, dart, co-dart, co-cricket, banner, and bull, and show that the following hold:
\begin{itemize}\itemsep=0pt
\item[(i)] Every $G\in \cal F$ is   perfectly divisible, when $F\in \{P_6, \text{co-dart}, \text{bull}\}$.
\item[(ii)] Every $G\in \cal F$ is either claw-free or perfectly divisible, when $F\in$ \{dart, banner, co-cricket\}.
\item[(iii)] Every $G\in \cal F$ satisfies $\chi(G)\leq \omega(G)^2$.
\item[(iv)] Since the class of $3K_1$-free graphs does not admit a linear $\chi$-binding function \cite{Brause}, and since each graph  $F$ and the fork  has a stable set of size $3$,  it follows that the class  $\cal F$   does not admit a linear $\chi$-binding function.
\end{itemize}

\section{Preliminaries}
We follow West \cite{West} for standard notation and terminology used here, and we refer to the website `\emph{https://www.graphclasses.org/smallgraphs.html}' for some special graphs used in this paper.
For a vertex $v$ in a graph $G$, $N_G(v)$ is the set of vertices adjacent to $v$,  $N_G[v]$ is the set $\{v\} \cup N_G(v)$, and $M_G(v)$ is the set $V(G) \setminus N_G[v]$.   Given a subset $X \subseteq V(G)$,     $N_G(X)$   is the set $\{u \in
V(G)\setminus X : u$ $ \mbox{~is adjacent to a vertex of }X\}$, and $M_G(X)$ is the set $V(G)\setminus (X\cup N_G(X))$.    We drop the subscript $G$ in the above notations if there is no ambiguity.
  For a vertex set $X \subseteq V(G)$, $G[X]$ denotes the subgraph of $G$ induced by $X$. We say that a graph $G$ contains a graph $H$ if $H$ is an induced subgraph of $G$.  The complement of a graph $G$ will be denoted $\overline{G}$.
  Given disjoint vertex sets $S, T$, we say that $S$ is \emph{complete} to $T$ if every vertex in $S$ is adjacent to
every vertex in $T$; we say $S$ is \emph{anticomplete} to $T$ if every vertex in $S$ is nonadjacent to every vertex
in $T$; and we say $S$ is \emph{mixed} on $T$ if $S$ is not complete or anticomplete to $T$. When $S$ has a
single vertex, say $v$, we can instead say that $v$ is complete to, anticomplete to, or mixed on $T$.  A vertex $v$ in $G$ is \emph{universal} if it is complete to $V(G)\setminus \{v\}$. A set $S \subseteq V(G)$ is a \emph{homogeneous set} if $1 < |S| < |V(G)|$ and
for every   $v\in V(G)\setminus S$, $v$  is either complete or anticomplete to $S$. We say that a graph $G$ admits a homogeneous set decomposition  if $G$ has a homogeneous set.
  The
\emph{independence number} $\alpha(G)$ of a graph $G$ is the size of a largest stable set in $G$. A \emph{triad} in a graph $G$ is a stable set of size $3$.

For  a vertex subset $S:=\{v_1, v_2,\ldots,v_k\}$ of $G$, we write $v_1$-$v_2$-$\cdots$-$v_k$-$v_1$ to denote  an induced cycle $C_k$ in $G$ with vertex set $S$ and edge set $\{v_1v_2,v_2v_3, \ldots,$ $v_{k-1}v_k,v_kv_1\}$, and we write $v_1$-$v_2$-$\cdots$-$v_k$ to denote an induced path $P_k$ in $G$ with vertex set $S$ and edge set $\{v_1v_2,v_2v_3, \ldots,v_{k-1}v_k\}$.

 We say that a vertex $v$ is a
\emph{center}  of a claw  in a graph $G$, if $v$ has neighbors $a,b, c \in V(G)$ such that $\{a, b, c\}$ is a triad; and we call the vertices $a$, $b$, $c$   the \emph{leaves} of the claw.

The \emph{paw} is the graph that consists of a $K_3$ with a pendant vertex attached to it. The \emph{diamond} is the graph $K_4-e$.

  We say that a vertex set $\{v_1,v_2,v_3,v_4,v_5\}$ induces:
  \begin{itemize}\itemsep=0pt
  \item a \emph{fork} if $\{v_2,v_3,v_4,v_5\}$ induces a claw with center $v_3$, and $v_1$ is a leaf adjacent to $v_2$.

\item a \emph{dart} if $\{v_1, v_2, v_3, v_4\}$ induces a claw with center $v_1$, and $v_5$ is adjacent to $v_1,v_3,$ and $v_4$ but not to $v_2$.

\item a \emph{banner} if $\{v_1, v_2, v_3, v_4\}$ induces a claw with center $v_1$, and $v_5$ is adjacent to $v_2,v_3$ but not to $v_1,v_4$.

\item a  \emph{co-dart} if $\{v_1,v_2,v_3,v_4\}$ induces a paw, and $v_5$ is anticomplete to $\{v_1,v_2,$ $v_3,v_4\}$.
\item a  \emph{co-cricket} if $\{v_1,v_2,v_3,v_4\}$ induces a diamond, and $v_5$ is anticomplete to $\{v_1,v_2,v_3,v_4\}$.

\item a \emph{bull} if $v_1$-$v_2$-$v_3$-$v_4$ is a path, and $v_5$ is adjacent to $v_2,v_3$ but not to $v_1,v_4$.
\end{itemize}

 Note that a co-cricket $\cong \text{diamond}\cup K_1$, and  a co-dart $\cong \text{paw}\cup K_1$ and is the complement graph of a dart.

\medskip

We use the following known results.  The class of perfect graphs admits a forbidden induced subgraph characterization, namely, \emph{the strong perfect graph theorem},   given below.

\begin{theorem}[\cite{spgt}]\label{thm:spgt}
A graph is perfect if and only if it has no odd hole or odd antihole.
\end{theorem}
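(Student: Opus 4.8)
The plan is to treat the two implications separately. The ``only if'' direction is short and direct: if a graph $G$ contains an odd hole $C_{2k+1}$ with $k\ge 2$ then $\chi(C_{2k+1})=3>2=\omega(C_{2k+1})$, and if $G$ contains an odd antihole $\overline{C_{2k+1}}$ then every stable set of $\overline{C_{2k+1}}$ has size at most $2$, so $\chi(\overline{C_{2k+1}})\ge \lceil (2k+1)/2\rceil = k+1$ while $\omega(\overline{C_{2k+1}})=k$; in either case $G$ has an imperfect induced subgraph and hence is not perfect. All the work is in the ``if'' direction: every \emph{Berge} graph (one with no odd hole and no odd antihole) is perfect. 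I would argue by contradiction, choosing a Berge graph $G$ with $\chi(G)>\omega(G)$ on a minimum number of vertices, so that $G$ is \emph{minimally imperfect}.

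First I would record the classical structural restrictions on a minimally imperfect graph: $G$ and $\overline{G}$ are connected, $G$ has no star cutset (Chv\'atal's star cutset lemma), no homogeneous set, $|V(G)|=\alpha(G)\omega(G)+1$ and every vertex lies in exactly $\omega(G)$ maximum cliques and $\alpha(G)$ maximum stable sets (Lov\'asz, Padberg), and $\overline{G}$ is again minimally imperfect, so the family of obstructions is closed under complementation. These are the facts the structural half of the proof must contradict. The core of the argument is a \emph{structure theorem for Berge graphs}: every Berge graph $G$ either is \emph{basic} --- a bipartite graph, the line graph of a bipartite graph, a double split graph, or the complement of one of these --- or $G$ or $\overline{G}$ admits a proper $2$-join, or $G$ admits a balanced skew partition.

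With this in hand I would close the loop by showing a minimally imperfect Berge graph fits none of the three outcomes. Basic graphs are perfect by classical arguments: bipartite graphs and their line graphs via K\"onig's edge-colouring/matching theorems, the complementary two cases via the replication/complementation theorem of Lov\'asz together with a short direct check for double split graphs. A minimally imperfect graph admits no proper $2$-join, by an amalgam-type decomposition argument in the spirit of Cornu\'ejols and Cunningham. Finally --- and this is the deepest external input --- a minimally imperfect graph has no balanced skew partition; this is the resolution of the relevant strengthening of Chv\'atal's Skew Partition Conjecture. Together these three facts eliminate every case of the structure theorem, so no minimally imperfect Berge graph exists, and every Berge graph is perfect.

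The main obstacle, by a wide margin, is the structure theorem itself: showing that a Berge graph with no balanced skew partition and no $2$-join in $G$ or $\overline{G}$ is basic requires a long case analysis organized around the presence or absence of various ``configurations'' inside $G$ (a stretched prism, a long odd hole, an induced subdivision of the line graph of a bipartite graph, and so on), with a separate treatment of each configuration and a delicate endgame when none of them occurs; interlacing this with the skew-partition exclusion is where essentially all the difficulty resides. I would expect the write-up to break into several largely self-contained parts, one per configuration plus the configuration-free case, following the Chudnovsky--Robertson--Seymour--Thomas program, and I do not see a materially shorter route.
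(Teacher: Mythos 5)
This statement is the Strong Perfect Graph Theorem; the paper does not prove it but imports it wholesale from the cited reference (Chudnovsky, Robertson, Seymour, Thomas), so there is no in-paper argument to compare yours against. Your write-up is an accurate pr\'ecis of exactly that cited proof: the ``only if'' direction you give is complete and correct ($\chi(C_{2k+1})=3>2=\omega$, and $\alpha(\overline{C_{2k+1}})=2$ gives $\chi\ge k+1>k=\omega$), and the ``if'' direction follows the CRST program faithfully --- minimum counterexample, classical properties of minimally imperfect graphs (Lov\'asz, Padberg, Chv\'atal's star cutset lemma), the structure theorem that a Berge graph is basic or admits a proper $2$-join in $G$ or $\overline{G}$ or a balanced skew partition, and then the elimination of each outcome for a minimal counterexample.

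Be clear, though, about what your proposal actually establishes on its own: essentially nothing beyond the easy direction, since the structure theorem and the skew-partition exclusion are precisely where the roughly 150 pages of work live, and you invoke them as black boxes. One nuance worth correcting: it is not known \emph{a priori} that every minimally imperfect graph has no balanced skew partition; what the CRST proof establishes (and uses) is that a \emph{minimum imperfect Berge} graph admits no balanced skew partition, and the general statement about minimally imperfect graphs (Chv\'atal's conjecture) is a corollary of the theorem rather than an input to it. Phrased as you have it, the argument would be circular if read literally; phrased as ``no minimum counterexample admits a balanced skew partition,'' it matches the actual proof. In the context of the present paper, the right move is simply to cite the theorem, as the authors do.
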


  Chudnovsky and Seymour \cite{CSey2010}   give a simple proof of the following $\chi$-bound for claw-free graphs
in general.

\begin{theorem}[\cite{CSey2010}]\label{thm:claw-free}
 Every claw-free graph $G$ satisfies $\chi(G)\leq \omega(G)^2$. Moreover, the bound is asymptotically tight.
\end{theorem}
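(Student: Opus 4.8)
I would derive the upper bound from a degeneracy bound, and obtain the asymptotic tightness from known Ramsey graphs of small independence number.

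\textbf{Upper bound.} Write $\omega := \omega(G)$. The crucial observation is that in a claw-free graph the neighborhoods have size bounded in terms of $\omega$ alone. Fix a vertex $v$. No three vertices of $N_G(v)$ form a triad, since such a triad together with $v$ would induce a claw centered at $v$; thus $\alpha(G[N_G(v)]) \le 2$. Also $\omega(G[N_G(v)]) \le \omega - 1$, since a clique of $N_G(v)$ together with $v$ is a clique of $G$. By Ramsey's theorem every graph on $R(3,\omega)$ vertices contains a triad or a $K_\omega$, so $|N_G(v)| \le R(3,\omega) - 1$; combined with the classical estimate $R(3,\omega) \le \binom{\omega+1}{2}$ (which follows from the induction $R(3,m) \le R(3,m-1) + m$), this gives $\deg_G(v) \le \binom{\omega+1}{2} - 1 < \omega^2$ for every $\omega \ge 1$ (the case of a graph with no edges being trivial). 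Since every induced subgraph $H$ of $G$ is claw-free with $\omega(H) \le \omega$, the same estimate holds inside $H$, so $G$ is $(\omega^2 - 1)$-degenerate; coloring greedily along a degeneracy ordering then yields $\chi(G) \le \omega^2$. (In fact this argument gives the marginally sharper bound $\chi(G) \le \binom{\omega(G)+1}{2}$.)

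\textbf{Tightness.} For the lower bound I would use Kim's lower bound $R(3,t) = \Omega(t^2/\log t)$ \cite{JHK} --- the same result behind the $O(\omega^2/\log\omega)$ bound quoted in the Introduction --- which provides, for each $t$, a triangle-free graph $H_t$ on $\Omega(t^2/\log t)$ vertices with $\alpha(H_t) = t$ (any extremal such graph has independence number exactly $t$). Its complement $G_t := \overline{H_t}$ satisfies $\alpha(G_t) = \omega(H_t) \le 2$ and $\omega(G_t) = \alpha(H_t) = t$; in particular $G_t$ has no triad and so is claw-free, while every color class of $G_t$ has at most $\alpha(G_t) \le 2$ vertices, whence $\chi(G_t) \ge |V(H_t)|/2 = \Omega(t^2/\log t) = \Omega\big(\omega(G_t)^2/\log \omega(G_t)\big)$. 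Hence the exponent $2$ cannot be lowered, and the bound is tight up to a logarithmic factor.

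\textbf{Main obstacle.} The single real idea is the local bound on $|N_G(v)|$; the Ramsey estimate, the greedy coloring, and the complementation construction for the lower bound are all routine --- which is precisely why a short proof is possible \cite{CSey2010}. The closest thing to an obstacle is that the tightness rests on the nontrivial existence of good Ramsey graphs, but that may simply be cited.
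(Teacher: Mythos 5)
Your argument is correct, and it is essentially the standard "simple proof" of the cited result: the paper itself states Theorem~\ref{thm:claw-free} only as a citation to \cite{CSey2010}, and that source's argument is the same Ramsey-type bound $|N(v)|\le R(3,\omega)-1\le\binom{\omega+1}{2}-1$ followed by greedy coloring, with tightness (up to the $\log$ factor, which is the intended reading of ``asymptotically tight'' given the $O(\omega^2/\log\omega)$ bound quoted in the introduction) coming from complements of triangle-free Ramsey graphs as you describe. The only nitpick is that you do not need $\alpha(H_t)=t$ exactly; $\alpha(H_t)\le t$ together with the monotonicity of $x^2/\log x$ already gives $\chi(G_t)=\Omega\bigl(\omega(G_t)^2/\log\omega(G_t)\bigr)$.
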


 Although perfect divisibility is a structural property, it immediately implies a quadratic $\chi$-binding function. Indeed, we have the following (see also \cite{CS2018}).

\begin{lemma}[\cite{Hoang-2}]\label{lem:pd-bound}
 Every perfectly divisible graph $G$ satisfies $\chi(G) \le \binom{\omega(G)+1}{2}$.
\end{lemma}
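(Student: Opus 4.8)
The plan is to argue by induction on $\omega(G)$. For the base case $\omega(G)\le 1$, the graph $G$ is edgeless, so $\chi(G)\le 1=\binom{2}{2}$ and the claimed bound holds. For the inductive step, fix $k=\omega(G)\ge 2$ and assume the bound holds for every perfectly divisible graph of clique number at most $k-1$. The first thing to note is that perfect divisibility, being defined by a condition on all induced subgraphs, is a hereditary property; hence every induced subgraph of $G$ is again perfectly divisible, and in particular the inductive hypothesis is available for any such subgraph whose clique number has dropped below $k$.

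Next I would apply the defining property of perfect divisibility to $H=G$ itself: there is a partition $V(G)=A\cup B$ with $G[A]$ perfect and $\omega(G[B])<\omega(G)$, i.e.\ $\omega(G[B])\le k-1$. Now bound the two parts separately. Since $G[A]$ is perfect, $\chi(G[A])=\omega(G[A])\le\omega(G)=k$ directly from the definition of perfect graph. Since $G[B]$ is a perfectly divisible graph with $\omega(G[B])\le k-1$, the inductive hypothesis gives $\chi(G[B])\le\binom{\omega(G[B])+1}{2}\le\binom{k}{2}$. Colouring $A$ and $B$ with disjoint palettes yields a proper colouring of $G$, so $\chi(G)\le\chi(G[A])+\chi(G[B])\le k+\binom{k}{2}=\binom{k+1}{2}$, the last equality being Pascal's identity $\binom{k}{1}+\binom{k}{2}=\binom{k+1}{2}$. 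This completes the induction.

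There is no genuinely hard step here: the argument is a short induction, and the only points requiring any care are the observation that perfect divisibility descends to induced subgraphs (so that the inductive hypothesis applies to $G[B]$) and the observation that perfectness of $G[A]$ immediately delivers $\chi(G[A])=\omega(G[A])\le\omega(G)$. Everything else reduces to the elementary arithmetic of binomial coefficients and the subadditivity of $\chi$ over a vertex partition.
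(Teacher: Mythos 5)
Your proof is correct and complete: perfect divisibility is indeed hereditary (its defining condition is quantified over all induced subgraphs), so the induction on $\omega(G)$ via the partition $V(G)=A\cup B$ with $G[A]$ perfect and $\omega(G[B])\le\omega(G)-1$, together with $\chi(G)\le\chi(G[A])+\chi(G[B])\le\omega(G)+\binom{\omega(G)}{2}=\binom{\omega(G)+1}{2}$, goes through without gaps. The paper itself does not prove this lemma but cites it from Ho\'ang's work, and your argument is exactly the standard one used there, so there is nothing to add.
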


 A graph $G$ is said to be \emph{perfectly  weight  divisible} if for every nonnegative integer weight function $w$ on $V(G)$, there is a partition of $V(G)$ into two sets $S$ and  $T$ such that $G[S]$ is perfect and the maximum weight of a clique in $G[T]$ is smaller than the maximum weight of a clique in $G$.
 We will also use the following results.

\begin{theorem}[\cite{CS2018}]\label{thm:hsd}
A minimal non-perfectly weight divisible graph does not admit a homogeneous set
decomposition.
\end{theorem}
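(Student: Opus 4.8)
The plan is to argue by contradiction. Suppose $G$ is a minimal non-perfectly-weight-divisible graph (so every proper induced subgraph of $G$ is perfectly weight divisible) that nonetheless has a homogeneous set $H$, and fix a nonnegative integer weight function $w$ on $V(G)$ for which no valid division of $V(G)$ exists. Since $1 < |H| < |V(G)|$, the graph $G[H]$ is a proper induced subgraph of $G$, and so is $G_2 := G - (H \setminus \{h_0\})$ for any fixed representative vertex $h_0 \in H$; hence both $G[H]$ and $G_2$ are perfectly weight divisible by minimality. The point of $G_2$ is that, by homogeneity of $H$, every vertex of $V(G) \setminus H$ is complete or anticomplete to $H$, so $G_2$ behaves exactly like the graph obtained from $G$ by ``contracting'' $H$ to the single vertex $h_0$ --- but it is an honest induced subgraph of $G$, which is what lets us invoke minimality.

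First I would transfer the problem to $G_2$. Set $k := \omega_w(G[H])$, the maximum $w$-weight of a clique of $G[H]$, and let $w_2$ be the weight function on $V(G_2)$ that agrees with $w$ off $H$ and has $w_2(h_0) = k$. A short clique-analysis --- every clique of $G$ meeting $H$ is the union of a clique inside $H$ and a set of vertices that, by homogeneity, is complete to $H$, and conversely --- shows that $\omega_{w_2}(G_2) = \omega_w(G)$; call this common value $N$. Applying perfect weight divisibility of $G_2$ to $w_2$ gives a partition $V(G_2) = A_2 \cup B_2$ with $G[A_2]$ perfect and $\omega_{w_2}(G[B_2]) < N$, and applying perfect weight divisibility of $G[H]$ to $w|_H$ gives a partition $H = A_1 \cup B_1$ with $G[A_1]$ perfect and $\omega_w(G[B_1]) < k$.

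Next I would reassemble a division of $V(G)$ for $w$ according to which side of the partition of $G_2$ contains $h_0$. If $h_0 \in B_2$, take $S := A_2$ and $T := (B_2 \setminus \{h_0\}) \cup H$; then $G[S] = G[A_2]$ is perfect, and every clique of $G[T]$ either misses $H$ (and then lies in $B_2 \setminus \{h_0\}$) or meets $H$ (and then, replacing its portion inside $H$ by $h_0$, corresponds to a clique of $G[B_2]$ of at least its $w$-weight), so $\omega_w(G[T]) \le \omega_{w_2}(G[B_2]) < N$. If $h_0 \in A_2$, take $S := (A_2 \setminus \{h_0\}) \cup A_1$ and $T := B_2 \cup B_1$; here $G[S]$ is obtained from the perfect graph $G[A_2]$ by substituting the perfect graph $G[A_1]$ for $h_0$, hence is perfect by the standard fact that substituting a perfect graph for a vertex of a perfect graph yields a perfect graph (Lov\'asz), while for a clique $K$ of $G[T]$ meeting $H$ we get $w(K) = w(K \cap H) + w(K \setminus H) < k + w(K \setminus H) \le N$ (using $\omega_w(G[B_1]) < k$ and that $\{h_0\} \cup (K \setminus H)$ is a clique of $G_2$ of $w_2$-weight $k + w(K \setminus H) \le N$), and a clique missing $H$ lies in $B_2$ with $w$-weight $< N$. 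In both cases $(S, T)$ is a legitimate perfect-divisibility partition for $w$, a contradiction; hence $G$ has no homogeneous set.

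The main technical obstacle is the recombination in the case $h_0 \in A_2$: one must verify that substituting $G[A_1]$ for $h_0$ in $G[A_2]$ really does reproduce $G[S]$ exactly (this needs homogeneity of $H$ together with $A_2 \setminus \{h_0\} \subseteq V(G) \setminus H$) so that the substitution lemma for perfect graphs applies, and one must keep the weight bounds on cliques of $G[T]$ coordinated across the two partitions so that the strict inequality $\omega_w(G[B_1]) < k$ survives. The degenerate subcases $A_1 = \emptyset$ and $A_2 = \{h_0\}$ amount to substituting the empty graph, i.e.\ simply deleting $h_0$, and require no separate argument.
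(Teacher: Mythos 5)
Your argument is correct: replacing the homogeneous set $H$ by a single representative $h_0$ weighted by $k=\omega_w(G[H])$, invoking minimality on both $G_2$ and $G[H]$, and recombining the two divisions according to which side of the division of $G_2$ contains $h_0$ (with Lov\'asz's replacement lemma supplying perfection when $h_0$ lies in the perfect side), together with your clique-weight bookkeeping in the two cases, is a complete proof. The paper does not prove Theorem~\ref{thm:hsd} but imports it from \cite{CS2018}, and your proof is essentially the argument given there (the weighted analogue of the classical ``minimal imperfect graphs have no homogeneous sets'' substitution argument), so apart from degenerate conventions inherited from the definition itself (e.g.\ $w$ identically zero on $H$), there is nothing to add.
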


The proof of the following theorem  is similar to the proof of Theorem~3.7 in \cite{CS2018}, and we give it here for completeness.

\begin{theorem}\label{pwd=pd}
Let $\cal C$ be a hereditary class of graphs. Suppose that every graph $H \in \cal C$ has a vertex $v$ such that $H[M_H(v)]$ is perfect.  Then every $G\in \cal C$ is perfectly weight divisible, and hence  perfectly divisible.
\end{theorem}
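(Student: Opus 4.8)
The plan is to prove the formally stronger conclusion that every $G\in\mathcal{C}$ is perfectly weight divisible, by a short induction on $|V(G)|$; the unweighted statement then comes for free. Indeed, since $\mathcal{C}$ is hereditary, every induced subgraph $H$ of a member of $\mathcal{C}$ again lies in $\mathcal{C}$, so applying perfect weight divisibility of $H$ to the all-ones weight function partitions $V(H)$ into a perfect part and a part of strictly smaller clique number --- which is precisely what perfect divisibility of $G$ demands. For a graph $H$ and a weight function $w$ on $V(H)$, write $\omega_w(H)$ for the maximum weight of a clique of $H$; I will assume throughout that $\omega_w(G)\ge 1$, the all-zero weight being degenerate.

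For the induction, let $G\in\mathcal{C}$, let $w$ be a nonnegative integer weight function on $V(G)$, and assume the result for all members of $\mathcal{C}$ with fewer vertices. By the hypothesis of the theorem, fix $v\in V(G)$ with $G[M_G(v)]$ perfect. The main case is $w(v)\ge 1$, and here I would simply take $S:=\{v\}\cup M_G(v)$ and $T:=N_G(v)$. Since $v$ is anticomplete to $M_G(v)$, the graph $G[S]$ is the disjoint union of the one-vertex graph on $v$ with $G[M_G(v)]$; as $G[M_G(v)]$ is perfect and a disjoint union of perfect graphs is perfect, $G[S]$ is perfect. On the other hand $v$ is complete to $T=N_G(v)$, so for every clique $Q$ of $G[T]$ the set $Q\cup\{v\}$ is a clique of $G$; choosing $Q$ of maximum weight gives $\omega_w(G)\ge \omega_w(G[T])+w(v)\ge \omega_w(G[T])+1$, whence $\omega_w(G[T])<\omega_w(G)$. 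Thus $(S,T)$ is the required partition. (This also subsumes the degenerate possibility $M_G(v)=\emptyset$, i.e.\ $v$ universal, as well as the one-vertex base case.)

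The only remaining case is $w(v)=0$, and this is the one place where --- in my reading --- some care is needed, though the point is minor: a weight-zero vertex cannot by itself account for any drop in clique weight, so it must instead be absorbed into an already-constructed low-weight part. Concretely, I would pass to $H:=G-v\in\mathcal{C}$; deleting a weight-zero vertex changes no clique weight, so $\omega_w(H)=\omega_w(G)$. By the inductive hypothesis $H$ is perfectly weight divisible, so $V(H)=S'\cup T'$ with $H[S']$ perfect and $\omega_w(H[T'])<\omega_w(H)$. Setting $S:=S'$ and $T:=T'\cup\{v\}$ then works: $G[S]=H[S']$ is perfect, while every clique of $G[T]$ has the same weight as its restriction to $T'$ (a clique of $H[T']$), hence weight at most $\omega_w(H[T'])<\omega_w(H)=\omega_w(G)$. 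This closes the induction. Notably, this route does not appear to need the homogeneous-set lemma quoted above; the entire content is the observation that $\{v\}\cup M_G(v)$ is a perfect set whose complement lies in the neighbourhood of the single vertex $v$, so that discarding it costs at least $w(v)$ from the maximum clique weight.
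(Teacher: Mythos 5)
Your proof is correct, and its engine is the same as the paper's: the partition into $S=\{v\}\cup M(v)$ (perfect, being $K_1$ plus a perfect graph) and a remainder contained in $N(v)$ together with weight-zero vertices, with the clique-weight drop coming from the fact that any positively weighted clique avoiding $S$ extends by the positive-weight vertex $v$. The only real divergence is the bookkeeping for weight-zero vertices. The paper avoids your case $w(v)=0$ altogether: it applies the hypothesis not to $G$ but to $H:=G[U]$, where $U=\{u: w(u)>0\}$ is the support of $w$ (heredity gives $H\in\mathcal{C}$), so the chosen vertex automatically satisfies $w(v)>0$, and it then takes $S=M_H(v)\cup\{v\}$ and $T=N_H(v)\cup(V(G)\setminus U)$ in a single step --- effectively doing all of your zero-weight deletions at once rather than one at a time by induction on $|V(G)|$. (The paper dresses this up as a minimal-counterexample argument, but minimality is never really used; like yours, it is a direct construction.) Both arguments silently set aside the identically-zero weight function, which you at least flag explicitly; and both, of course, use heredity of $\mathcal{C}$, you for $G-v$ and the paper for $G[U]$. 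So your route is a legitimate, marginally longer variant; the paper's restriction-to-the-support trick is the one-step version of your Case~2 recursion.
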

\begin{proof} Let $G\in \cal C$ be a minimal counterexample to the theorem. Then there is a nonnegative integer weight function $w$ on $V(G)$ for which there is no partition of $V(G)$ as in the definition of perfectly weight divisibility. Let $U$ be the set  $\{v\in V(G)\mid w(v)>0\}$, and let $H\cong G[U]$. Since $\cal C$ is hereditary, $H\in \cal C$, and so by the  hypothesis of the theorem, $H$ has a vertex $v$ such that $H[M_{H}(v)]$ is perfect. But now, since $w(v)>0$, if we let $S=M_{H}(v)\cup \{v\}$ and $T=N_{H}(v)\cup (V(G)\setminus U)$, then we get a partition of $V(G)$ as in the definition of   perfectly weight divisibility, a contradiction. This proves the theorem.
\end{proof}

\section{Classes of  fork-free graphs}

\subsection{The class of (fork,\,$P_6$)-free graphs}

In this section we prove that (fork,\,$P_6$)-free graphs are perfectly divisible, and hence the class of (fork,\,$P_6$)-free graphs is quadratically $\chi$-bounded. A vertex set $X$ in a graph $G$ is said to be \emph{anticonnected} if the subgraph induced by $X$ in $\overline{G}$ is connected. Also, a vertex $v$ is an \emph{anticenter} for a vertex set $X$ if $N[v] \cap X = \emptyset$. An \emph{antipath} in $G$ is the complement of the path $v_1$-$v_2$-$\cdots$-$v_k$ in $G$, for some $k$.

\begin{theorem}\label{FP6freeStruc}
Let $G$ be a (fork,\,$P_6$)-free graph which is not perfectly divisible. Then $G$ has a homogeneous set.
\end{theorem}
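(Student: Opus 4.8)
\noindent\textit{Proof proposal.} I would prove the contrapositive: every (fork,\,$P_6$)-free graph $G$ with no homogeneous set is perfectly divisible. First come the routine reductions. A nontrivial component of $G$ or of $\overline G$ would be a homogeneous set, and $G$ is perfect if $G$ or $\overline G$ is edgeless, so we may assume $G$ and $\overline G$ are connected; we may also assume $G$ is imperfect, since otherwise every non-neighbourhood already induces a perfect graph. As $G$ is $P_6$-free its only induced odd holes are copies of $C_5$, so by Theorem~\ref{thm:spgt} $G$ contains either an induced $C_5$ or an odd antihole $\overline{C_{2k+1}}$ with $k\ge 3$ (the latter is itself $P_6$-free, so this case really can occur). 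Finally, by induction on $|V(G)|$, handling any induced subgraph that carries a homogeneous set $S$ via the inductive hypothesis applied to $G[S]$ and to the graph obtained by replacing $S$ with a single vertex (perfect divisibility behaves well under such substitution), the problem reduces to finding, as in Theorem~\ref{pwd=pd}, a single vertex $v\in V(G)$ with $G[M(v)]$ perfect; then $A=\{v\}\cup M(v)$ and $B=N(v)$ give a partition of $V(G)$ with $G[A]$ perfect and $\omega(G[B])<\omega(G)$.

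The substance is producing such a vertex $v$. Suppose $G$ contains an induced $C_5$, say $C=v_1$-$v_2$-$v_3$-$v_4$-$v_5$-$v_1$, and classify each vertex of $G-C$ by its neighbourhood on $C$. Fork-freeness is the main lever: a vertex with exactly one neighbour on $C$, or with exactly two non-consecutive neighbours on $C$, produces a fork (put the claw at the appropriate vertex of $C$, with the offending vertex and two vertices of $C$ as leaves and a third vertex of $C$ as the remaining vertex of the fork). Hence the only admissible neighbourhoods on $C$ are $\emptyset$, a consecutive pair, a triple, a quadruple, or all of $C$; and $P_6$-freeness then forces every vertex of $G$ to lie close to $C$, limits which triple- and quadruple-types occur, and controls how vertices of different types attach to one another --- for example the vertices complete to $C$ form a clique, and a vertex anticomplete to $C$ can attach to $C$ only through a tightly restricted set of intermediate vertices. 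Pushing through this analysis, I would aim to show that, unless two vertices with the same neighbourhood on $C$ turn out to be twins in $G$ (which hands us a homogeneous set), one of $v_1,\ldots,v_5$, or a carefully chosen vertex adjacent to $C$, has a non-neighbourhood that induces a disjoint union of cliques, hence a perfect graph. In the remaining case $G$ is $C_5$-free and contains some odd antihole $\overline{C_{2k+1}}$, $k\ge 3$; here each vertex of the antihole has only two non-neighbours inside it, and one argues, again via fork- and $P_6$-freeness, that the non-neighbourhood in all of $G$ of such a vertex stays perfect, or else a homogeneous set emerges.

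The hard part will be the $C_5$ case analysis: with so many vertex-types relative to $C$, one must, for each consistent combination of them, either locate the vertex whose non-neighbourhood is perfect or extract a homogeneous set --- and homogeneous sets tend to hide in the adjacency pattern between two vertices with the same neighbourhood on $C$, so it is precisely the control of those same-type interactions (by ruling out forks and induced $P_6$'s running through such pairs) that I expect to absorb most of the effort. The odd-antihole case should be shorter, but it cannot be merged into the $C_5$ case, since $G$ need not contain the smallest antihole $\overline{C_7}$.
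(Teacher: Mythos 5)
Your proposal is a plan rather than a proof: the entire mathematical content of the statement is deferred to the two case analyses that you explicitly leave open (``Pushing through this analysis, I would aim to show\dots'', ``one argues\dots''). Nothing in the write-up establishes that, in a (fork,\,$P_6$)-free graph with no homogeneous set, some vertex has a perfect non-neighbourhood, and that claim is where all the difficulty sits. Worse, at least one of the intermediate facts you assert along the way does not follow from fork- and $P_6$-freeness alone: two nonadjacent vertices both complete to an induced $C_5$ create neither a fork nor an induced $P_6$ (the resulting $7$-vertex graph has no triad at all, hence no fork, and no induced $P_6$), so ``the vertices complete to $C$ form a clique'' needs a genuinely global argument or may simply be false, and the classification of attachments would then have to carry extra types such as pairs of nonadjacent $C$-dominating vertices. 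A second, more repairable, gap is the parenthetical ``perfect divisibility behaves well under such substitution'': unweighted perfect divisibility is not known to be closed under substitution in any obvious way (lifting a partition of the quotient requires knowing $\omega$ of the substituted piece), which is exactly why the weighted notion and Theorems~\ref{thm:hsd} and~\ref{pwd=pd} exist; your reduction should be routed through perfect weight divisibility rather than asserted.

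It is also worth noting that the paper's own proof avoids your heavy case analysis entirely and proves the homogeneous set exists directly, not via the contrapositive. Since $G$ is not perfectly divisible, there is a vertex $v$ whose non-neighbourhood is not perfect, and by Theorem~\ref{thm:spgt} together with $P_6$-freeness this yields an odd antihole $X_0\subseteq M(v)$ with anticenter $v$. One then grows $X_0$ greedily, repeatedly adding any vertex with both a neighbour and a non-neighbour in the current set; the maximal set $X$ so obtained is automatically a homogeneous set, and two short claims (using only fork- and $P_6$-freeness) show that every vertex in $N(A)$, where $A$ is the set of anticenters of $X_0$, is complete to each $X_i$, so $X$ never meets $A\cup N(A)$ and hence $X\neq V(G)$. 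If you want to salvage your route, the growing-set idea is the device that replaces the attachment-type analysis you expected to absorb most of the effort.
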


\begin{proof}
Since $G$ is not perfectly divisible, given  $v\in V(G)$,    $G[M(v)]$ is not perfect, so it contains an odd hole $C_n$ or an odd antihole $\overline{C_n}$, by Theorem~\ref{thm:spgt}. Note that $C_5 \cong \overline{C_5}$, and since $G$ is $P_6$-free, $G$ does not contain $C_n$ for $n > 6$. So $G$ contains an odd antihole induced by $X_0$ with anticenter $v$.
We construct a sequence of vertex sets $X_0,X_1,\ldots,X_t,\ldots$ such that for each $i$, $X_i$ is obtained from $X_{i-1}$ by adding one vertex $x_i$ that has a neighbor and a nonneighbor in $X_{i-1}$. Let $X$ be the maximal vertex set obtained this way. By maximality, $X$ is a  homogeneous set. We show that $X \neq V(G)$; in particular, we  show that $X$ does not intersect the set $A$ of anticenters for $X_0$.

\begin{claim}\label{NAX0}
$N(A)$ is complete to $X_0$.
\end{claim}
 Let $v_1, v_2, \ldots, v_n$ be the vertices of $X_0$, with edges $v_iv_j$ whenever $|i - j| \neq 1$ (indices are modulo $n$). Suppose to the contrary that there exists a vertex $b \in N(A)$ that has a nonneighbor in $X_0$.   By assumption,  $b$ has a neighbor in $X_0$, and a neighbor $a \in A$.
 Suppose that $b$ has two consecutive neighbors in $X_0$.  Then  there is some $i$ such that $v_i, v_{i+1} \in N(b)$ and $v_{i+2} \not\in N(b)$. But then  $\{v_{i+2},v_i,b,v_{i+1},a\}$ induces a fork. So we may assume that $b$ does not have two consecutive neighbors in $X_0$. Then since $n$ is odd, there must exist some $i$ such that $v_i,v_{i+1},v_{i+3}\notin N(b)$ and $v_{i+2} \in N(b)$. But now $a$-$b$-$v_{i+2}$-$v_i$-$v_{i+3}$-$v_{i+1}$ is a $P_6$ which is a contradiction. This proves \ref{NAX0}.

\begin{claim}\label{ANAXi}
For each $i\in \{0,1,2,\ldots,\}$, $X_i$ is complete to $N(A)$, and $X_i$ does not intersect $A \cup N(A)$.
\end{claim}
 We prove the assertion by induction on $i$. By \ref{NAX0}, we may assume that $i\geq 1$. Suppose to the contrary that $x_i$ has a nonneighbor $b \in N(A)$; then $b$ is a center for $X_{i-1}$. Since $x_i$ has a neighbor in $X_{i-1}$ and $N(A) \cap X_{i-1}=\es$, we have $x_i \not\in A$. Since $x_i$ has a nonneighbor in $X_{i-1}$ and $N(A)$ is complete to $X_{i-1}$, we have $x_i \not\in N(A)$.  Since $x_i \not\in A$, $x_i$ has a neighbor in $X_0$, say $v_1$. Note that $X_i$ is anticonnected for each $i$. Then if $\overline{P}$ is a shortest antipath from $x_i$ to $v_1$, $\overline{P}$ contains at least three vertices; label its vertices $x_i=w_1$-$w_2$-$\cdots$-$w_t=v_1$ in order. Since $w_2, w_3 \in X_{i-1} \subseteq N(b)$, $\{x_i,w_3,b,w_2,a\}$ induce a fork which is a contradiction. So $x_i$ has no nonneighbors in $N(A)$. This proves \ref{ANAXi}.

\medskip
Now by \ref{ANAXi}, it follows  that the vertex set $X$ does not intersect $A \cup N(A)$. Since $A\neq \es$, we have $X \neq V(G)$, as desired.
\end{proof}

\begin{cor}\label{FP6-pd}
Every (fork,\,$P_6$)-free graph is perfectly divisible.
\end{cor}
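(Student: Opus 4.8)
The plan is to deduce Corollary~\ref{FP6-pd} from Theorem~\ref{FP6freeStruc} and the two facts about perfect divisibility recalled in Section~2. It suffices to prove that every graph in the class $\cal C$ of (fork,\,$P_6$)-free graphs is perfectly weight divisible: indeed, $\cal C$ is hereditary, so once that is known, any induced subgraph $H$ of any $G\in\cal C$ is itself in $\cal C$, hence perfectly weight divisible, and feeding the all-ones weight function on $V(H)$ into the definition gives a partition $V(H)=A\cup B$ with $H[A]$ perfect and $\omega(H[B])<\omega(H)$, i.e.\ $G$ is perfectly divisible. I would establish that every $G\in\cal C$ is perfectly weight divisible by a minimal-counterexample argument.

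So suppose some member of $\cal C$ is not perfectly weight divisible, and let $G$ be such a graph with $|V(G)|$ as small as possible. Since $\cal C$ is hereditary, every proper induced subgraph of $G$ lies in $\cal C$ and is therefore perfectly weight divisible, so $G$ is a minimal non-perfectly-weight-divisible graph; Theorem~\ref{thm:hsd} then tells us that $G$ has no homogeneous set. The next step is to show that $G[M(v)]$ is imperfect for every $v\in V(G)$. If some $v$ had $G[M(v)]$ perfect, then for an arbitrary nonnegative integer weight function $w$ on $V(G)$ one could produce the partition demanded by perfect weight divisibility. When $w(v)>0$, take $S=M(v)\cup\{v\}$ and $T=N(v)$: here $G[S]=G[M(v)]\cup K_1$ is perfect, and since $v$ is complete to $N(v)$, every clique of $G[T]$ extends through $v$ to a strictly heavier clique of $G$, so the maximum clique weight in $G[T]$ is below that in $G$. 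When $w(v)=0$, apply perfect weight divisibility of the (smaller, hence perfectly weight divisible) graph $G-v$ to $w$ restricted to $V(G-v)$ to get $V(G-v)=S'\cup T'$ with $(G-v)[S']$ perfect and the maximum clique weight in $(G-v)[T']$ below that in $G-v$ --- which equals that in $G$ since $w(v)=0$ --- and then set $S=S'$ and $T=T'\cup\{v\}$. Either way we contradict the minimality of $G$, so $G[M(v)]$ is imperfect for every $v$. (This is just the one-graph specialization of the argument proving Theorem~\ref{pwd=pd}.)

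The third step is to feed this property --- that $G$ is (fork,\,$P_6$)-free and $G[M(v)]$ is imperfect for every $v$ --- back into the proof of Theorem~\ref{FP6freeStruc}, which is exactly where that proof starts: for a vertex $v$, imperfection of $G[M(v)]$ gives, by Theorem~\ref{thm:spgt} together with the $P_6$-freeness of $G$ (which kills every induced $C_n$ with $n\ge 6$), an odd antihole $X_0$ inside $G[M(v)]$, so that $v$ is an anticenter of $X_0$; and then the greedy construction of that proof --- repeatedly absorbing a vertex having both a neighbour and a non-neighbour in the current set, with the forbidden fork and $P_6$ ensuring at each stage that nothing in the anticenter class of $X_0$ or its neighbourhood gets absorbed --- produces a homogeneous set of $G$ disjoint from the nonempty set of anticenters of $X_0$, hence a \emph{proper} homogeneous set of $G$. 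This contradicts the first step. Hence every $G\in\cal C$ is perfectly weight divisible, and therefore perfectly divisible; Lemma~\ref{lem:pd-bound} additionally yields $\chi(G)\le\binom{\omega(G)+1}{2}$ for every such $G$, so the class $\cal C$ is quadratically $\chi$-bounded.

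The point I expect to require the most care --- and the reason for routing the argument through perfect \emph{weight} divisibility rather than attacking perfect divisibility directly --- is the closure under homogeneous-set decomposition. A head-on minimal-counterexample proof for perfect divisibility would end with a graph $G$ possessing a homogeneous set $S$ such that $G[S]$ and the graph obtained from $G$ by contracting $S$ to a single vertex are both perfectly divisible, and to finish one would need a substitution lemma asserting that perfect divisibility is preserved when $S$ is re-expanded; such a lemma costs Lov\'asz's substitution lemma for perfect graphs plus a short but somewhat delicate clique-size bookkeeping. Perfect weight divisibility is set up precisely so that this closure comes for free as Theorem~\ref{thm:hsd}, which is what makes the detour worthwhile; beyond that, the corollary is essentially bookkeeping layered on top of Theorem~\ref{FP6freeStruc}.
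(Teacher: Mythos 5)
Your proposal is correct and follows essentially the same route as the paper: a minimal non-perfectly-weight-divisible counterexample, Theorem~\ref{thm:hsd} to exclude homogeneous sets, and the construction from Theorem~\ref{FP6freeStruc} (whose proof indeed only needs that $G[M(v)]$ is imperfect for some vertex $v$) to produce one, with perfect divisibility then recovered from perfect weight divisibility via the all-ones weighting. The extra care you take in the $w(v)=0$ case and in bridging ``not perfectly divisible'' versus ``not perfectly weight divisible'' is exactly the bookkeeping the paper delegates to Theorem~\ref{pwd=pd}, so there is nothing genuinely different here.
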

\begin{proof}This follows from Theorems~\ref{thm:hsd} and \ref{FP6freeStruc}. \end{proof}

We immediately have the following corollary which generalizes the result that the class of $3K_1$-free graphs is perfectly divisible \cite{Hoang}.

\begin{cor}\label{P3uK1-pd}
Every $P_3\cup K_1$-free graph is perfectly divisible.
\end{cor}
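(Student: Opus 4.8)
The plan is to derive this as an immediate consequence of Corollary~\ref{FP6-pd}, by checking that every $(P_3\cup K_1)$-free graph is in fact (fork,\,$P_6$)-free. So I would first verify that each of the fork and $P_6$ contains an induced $P_3\cup K_1$; once that is known, a graph with no induced $P_3\cup K_1$ can contain neither a fork nor a $P_6$, and perfect divisibility follows directly from Corollary~\ref{FP6-pd}.

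For the fork, take its vertex set $\{v_1,v_2,v_3,v_4,v_5\}$ with $\{v_2,v_4,v_5\}$ a triad, $v_3$ complete to $\{v_2,v_4,v_5\}$, and $v_1$ adjacent only to $v_2$ (as in the definition of fork given above). Then $v_4$-$v_3$-$v_5$ is an induced $P_3$, and $v_1$ is nonadjacent to each of $v_3,v_4,v_5$, so $\{v_1,v_3,v_4,v_5\}$ induces $P_3\cup K_1$. For $P_6=v_1$-$v_2$-$\cdots$-$v_6$, the vertices $v_1,v_2,v_3$ induce a $P_3$ and $v_5$ is nonadjacent to each of them, so $\{v_1,v_2,v_3,v_5\}$ induces $P_3\cup K_1$. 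Hence every $(P_3\cup K_1)$-free graph contains no fork and no $P_6$, so it is (fork,\,$P_6$)-free, and Corollary~\ref{FP6-pd} gives that it is perfectly divisible.

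I expect no genuine obstacle here: the entire argument is the class inclusion together with two trivial subgraph checks. I would also add, to justify the word ``generalizes'', the remark that $P_3\cup K_1$ itself contains $3K_1$ — the two endpoints of the path together with the isolated vertex form a triad — so every $3K_1$-free graph is $(P_3\cup K_1)$-free, which recovers Ho\'ang's result \cite{Hoang} that $3K_1$-free graphs are perfectly divisible.
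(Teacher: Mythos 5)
Your proof is correct and matches the paper's intended argument: the paper derives this corollary immediately from Corollary~\ref{FP6-pd}, exactly via the observation that both the fork and $P_6$ contain an induced $P_3\cup K_1$, which you verify explicitly. Your added remark that $P_3\cup K_1$ contains $3K_1$ correctly justifies the claim that this generalizes Ho\'ang's result.
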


\begin{cor}\label{FP6-bnd}
Every (fork,\,$P_6$)-free graph $G$ satisfies $\chi(G)\le \binom{\omega(G)+1}{2}$.
\end{cor}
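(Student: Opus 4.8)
The plan is simply to chain together results already in hand. Corollary~\ref{FP6-pd} tells us that every (fork,\,$P_6$)-free graph is perfectly divisible, and Lemma~\ref{lem:pd-bound} tells us that any perfectly divisible graph $G$ satisfies $\chi(G)\le\binom{\omega(G)+1}{2}$. So for a (fork,\,$P_6$)-free graph $G$ one just invokes Lemma~\ref{lem:pd-bound}, and the proof is a single line. There is no real obstacle at this stage: all the work was done in Theorem~\ref{FP6freeStruc} (the structural dichotomy) together with Theorem~\ref{thm:hsd} and Lemma~\ref{lem:pd-bound}.

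If one wanted the argument to be self-contained, I would also spell out the short induction behind Lemma~\ref{lem:pd-bound}, since that is the only place any genuine counting happens. The induction is on $\omega(G)$: for $\omega(G)\le 1$ the bound is immediate. For the inductive step, apply the definition of perfect divisibility to $H=G$ to obtain a partition $V(G)=A\cup B$ with $G[A]$ perfect and $\omega(G[B])<\omega(G)$. Then $\chi(G[A])=\omega(G[A])\le\omega(G)$ since $G[A]$ is perfect, while $G[B]$ is an induced subgraph of a perfectly divisible graph, hence itself perfectly divisible, so $\chi(G[B])\le\binom{\omega(G[B])+1}{2}\le\binom{\omega(G)}{2}$ by induction. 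Colouring $A$ and $B$ with disjoint palettes gives $\chi(G)\le\omega(G)+\binom{\omega(G)}{2}=\binom{\omega(G)+1}{2}$.

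In short, the expected ``hard part'' does not lie in this corollary at all; it lives entirely in establishing the homogeneous-set dichotomy of Theorem~\ref{FP6freeStruc}, which (combined with Theorem~\ref{thm:hsd}) already yields Corollary~\ref{FP6-pd}. Given that, Corollary~\ref{FP6-bnd} is automatic, and in particular the class of (fork,\,$P_6$)-free graphs is quadratically $\chi$-bounded.
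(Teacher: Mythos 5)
Your proposal is correct and matches the paper's argument exactly: Corollary~\ref{FP6-bnd} is obtained by combining Corollary~\ref{FP6-pd} (perfect divisibility of (fork,\,$P_6$)-free graphs) with Lemma~\ref{lem:pd-bound}. Your optional sketch of the induction behind Lemma~\ref{lem:pd-bound} is also sound, though the paper simply cites that lemma.
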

\begin{proof}This follows from Lemma~\ref{lem:pd-bound}. \end{proof}

\subsection{The class of (fork, dart)-free graphs}

In this section, we prove that the class of (fork, dart)-free graphs is quadratically $\chi$-bounded.

\begin{theorem}\label{FDfreeStruc-1}
Let $G$ be a connected (fork,\,dart)-free graph. If $G$ contains a claw with center $v$, then $G[M(v)]$ is perfect.
\end{theorem}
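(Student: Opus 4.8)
The plan is to fix a connected (fork, dart)-free graph $G$ containing a claw with center $v$ and leaves $a_1, a_2, a_3$, set $M := M(v) = V(G) \setminus N[v]$, and argue that $G[M]$ has no odd hole and no odd antihole, so that it is perfect by Theorem~\ref{thm:spgt}. The key leverage is that every vertex of $M$ together with the triad $\{a_1,a_2,a_3\}$ and the center $v$ is constrained: if $m \in M$ has a neighbor $a_i$ and a nonneighbor $a_j$ among the leaves, then $\{m, a_i, v, a_j, a_k\}$ is in danger of inducing a fork (with $v$ as the center of the claw $\{v, a_i, a_j, a_k\}$ and $m$ the pendant on $a_i$) — so each $m \in M$ is either anticomplete to $\{a_1,a_2,a_3\}$ or complete to it, or else the adjacency pattern forces a dart or a different fork. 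First I would establish this dichotomy carefully, distinguishing $m$'s that see all of $\{a_1,a_2,a_3\}$, all that see none, and ruling out the mixed case; the leaves $a_i$ themselves may lie in $M$ or in $N(v)$, which needs a small case split but is routine.

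Next I would analyze an odd hole $H = h_1\text{-}h_2\text{-}\cdots\text{-}h_{2k+1}\text{-}h_1$ in $G[M]$ (so $k \geq 2$). Using connectivity of $G$ and the fact that $v \notin M$, together with the fork- and dart-freeness, I would show no such $H$ can coexist with $v$ and its claw: the standard move is to look at how vertices of $N[v]$ attach to $H$. A vertex with two consecutive neighbors on a long induced cycle, plus a non-neighbor two steps further, plus an extra pendant, tends to create a fork; a vertex adjacent to exactly two vertices of $H$ at distance two on the cycle creates a fork or dart with a nearby cycle vertex; and if some vertex is complete or near-complete to $H$, one gets $K_3$'s forbidding the $a_i$ pattern. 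Combining with the leaf dichotomy above — the $a_i$ are anticomplete or complete to each $h_j$ — pins down enough structure to reach a contradiction. The same template, applied in the complement, handles odd antiholes (here $C_5 \cong \overline{C_5}$ is self-complementary, so that smallest case is covered once; for $\overline{C_7}$ and larger one uses that $G$ is $P_6$-free is \emph{not} available, so this must be done directly from fork/dart-freeness via antipaths, exactly in the spirit of the $\overline{P}$ argument in the proof of Theorem~\ref{FP6freeStruc}).

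The main obstacle I expect is the odd antihole case: unlike the $(\text{fork}, P_6)$-free setting, there is no bound on the length of holes or antiholes here, so one cannot simply invoke $P_6$-freeness to kill long antiholes, and one must instead push an antipath/fork argument through a potentially long anticonnected set $X_0$ inducing $\overline{C_n}$ sitting inside $M(v)$, carefully tracking how the claw $\{v,a_1,a_2,a_3\}$ interacts with it. I would handle this by first using the leaf dichotomy to show each $a_i$ is complete or anticomplete to $X_0$, then — since $v$ is an anticenter for $X_0$ and $X_0$ is anticonnected — taking a shortest antipath between suitable vertices and exhibiting a fork exactly as in Claim~\ref{ANAXi}, with the triad supplied by the claw leaves or by three pairwise-nonadjacent vertices of $\overline{C_n}$ (which exist for $n \geq 7$). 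Once both the odd hole and odd antihole cases are excluded, $G[M(v)]$ is perfect, completing the proof.
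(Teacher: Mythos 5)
Your plan has two concrete errors that undermine it. First, the ``leaf dichotomy'' you propose to establish is false: if $m \in M(v)$ is adjacent to exactly two of the three leaves, say $a_1,a_2$ but not $a_3$, then $\{v,a_1,a_2,a_3,m\}$ induces a \emph{banner} (a $C_4$ plus a pendant at $v$), which is neither a fork nor a dart, so nothing forbids this mixed case and you cannot rule it out. The correct statement (the paper's Claim~\ref{YL}) is only that a vertex of $M(v)$ with a neighbor in a triad of leaves has at least \emph{two} (nonadjacent) neighbors in it, and the paper has to live with this weaker conclusion throughout, which is why its hole and antihole analyses are substantially more delicate than the ``standard moves'' you sketch. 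Second, for the antihole case you propose to supply a triad by ``three pairwise-nonadjacent vertices of $\overline{C_n}$ (which exist for $n \geq 7$)''; this is false, since a stable set in $\overline{C_n}$ is a clique in $C_n$, so every antihole has independence number $2$. Thus the proposed engine for the antipath/fork argument in the antihole case does not exist, and that case is left essentially unaddressed.

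Beyond these errors, the plan differs from the paper in a way that matters: the paper does not fix one claw but takes $L$ to be the set of \emph{all} leaves of claws centered at $v$, splits $M(v)$ into $Y = M(v)\cap N(L)$ and $X = M(v)\setminus N(L)$, and proves (using connectivity and a chain of fork/dart arguments) that $X$ is a clique complete to $Y$, so that any odd hole or antihole of $G[M(v)]$ lies entirely in $Y$, where every vertex has two nonadjacent neighbors in $L$. Your proposal never deals with the vertices of $M(v)$ that see none of your three fixed leaves; for such vertices the interaction with $v$ is only through unspecified ``connectivity'' reasoning, and the subsequent contradictions you describe (``tends to create a fork'', ``pins down enough structure'') are not worked out. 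As it stands the proposal rests on a false dichotomy, a false fact about antiholes, and an unproved reduction of the hole/antihole to vertices attached to the claw, so it does not constitute a proof.
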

\begin{proof}
Let $G$ be a (fork,\,dart)-free graph containing a claw with center $v$. Let $L$ be the set of leaves of claws in $G$ with center $v$. Then $L$ has a triad, and so $|L|\geq 3$. Let $Y$ denote the set $M(v) \cap N(L)$, and  $X$ denote the set $M(v) \setminus N(L)$. Then $M(v)$ is the set $X \cup Y$.

\begin{claim}\label{YL}
If $y \in Y$ has a neighbor in a triad $T$ in $L$, then it has at least two neighbors in $T$. Moreover, every $y \in Y$ has a neighbor in a triad $T$, so has two nonadjacent neighbors in $T$.
\end{claim}
 Let $\{a,b,c\}$ be a triad in $L$, and suppose that $y$ is adjacent to $a$. Then since $\{y,a,v,b,c\}$ does not induce a fork, we see that $y$ is adjacent to $b$ or $c$. This prove the first assertion of \ref{YL}. Note that by definition, any $y$ in $Y$ has a neighbor $a \in L$; then there exist $b,c\in L$ such that $\{a,b,c\}$ is a triad, and hence $y$ is also adjacent to $b$ or $c$. This proves \ref{YL}.

\begin{claim}\label{NeiP3free}
If $a \in L$, then $G[N(a)\cap Y]$ is $P_3$-free.
\end{claim}
If there is a $P_3$, say $y_1$-$y_2$-$y_3$, in $G[N(a)\cap Y]$, then $\{a,v,y_1,y_3,y_2\}$ induces a dart, a contradiction.  This proves \ref{NeiP3free}.

\begin{claim}\label{L2L}
$N(v) \setminus L$ is complete to $L$.
\end{claim}
Suppose to the contrary that $t\in N(v)\setminus L$ has a nonneighbor $a \in L$. Then since $|L|\geq 3$, there are vertices  $b$ and $c$ in $L$ such that $\{v, a, b, c\}$ is a claw. Now, if $t$ is not adjacent to $b$, then $t$ is a leaf of the claw induced by $\{v, a, b, t\}$, a contradiction to our assumption that $t\notin L$. So $t$ is adjacent to $b$. Likewise, $t$ is adjacent to $c$. But then $\{v,a,b,c,t\}$ induces a dart which is a contradiction. This proves \ref{L2L}.

\begin{claim}\label{NvX}
$N[v]$ is anticomplete to $X$.
\end{claim}
Suppose to the contrary that there exists a vertex $t\in N(v)$ such that $t$ has a  neighbor, say $x\in X$. Since  $L$ is anticomplete to $X$ (by the definition), we may assume that $t\in N(v)\sm L$. By \ref{L2L}, $\{t\}$ is complete to $L$. Then since $|L|\geq 3$, there exist $a,b\in L$ such that $\{t,x,a,b,v\}$ induces a dart which is a contradiction.  This proves \ref{NvX}.

\begin{claim}\label{yCompX}
Let $y\in Y$ and let $X'$ be a component of $X$. Then $y$ is not mixed on $V(X')$.
\end{claim}
Suppose not. We may assume that $y$ is mixed on an edge $xx'$ in $X'$. Then since $y\in Y$, by \ref{YL}, there exist nonadjacent vertices $a,b\in L$ such that $a,b\in N(y)$. But then $\{x', x, y, a, b\}$ induces a fork, a contradiction. This proves \ref{yCompX}.

\begin{claim}\label{XY}
$X$ is complete to $Y$.
\end{claim}
 By \ref{NvX}, $N(X) \subseteq Y$. Since $G$ is connected, it follows  from \ref{yCompX} that  every vertex in $X$ has a neighbor in $Y$.
Suppose to the contrary that $x \in X$ is mixed on $Y$. Let $y\in N(x)\cap Y$ and let $y'\notin N(x)\cap Y$. By \ref{YL}, let $a$ and $b$ be the neighbors of $y$ in $L$. Recall that, by \ref{NvX}, $x$ is anticomplete to $L$. Suppose that $y'$ is adjacent to $y$. Then since  $\{v,a,y,y',x\}$ does not induce a fork, $y'$ is adjacent to $a$. Likewise, $y'$ is adjacent to $b$. But then $\{y,x,a,b,y'\}$ induces a dart, a contradiction. So we may assume that $y'$ is not adjacent to $y$.
Then since $\{x,y,a,y',v\}$ does not  induce a fork, $y'$ is not adjacent to $a$. Likewise, $y'$ is not adjacent to $b$. Then by \ref{YL}, $y'$ has nonadjacent neighbors $a',b'\in L\setminus \{a,b\}$. Then since $\{y',a',v,a,b\}$ does not induce a fork or a dart, we may assume that $a'$ is adjacent to $a$, but not to $b$. Then since $\{a',v,y,y',a\}$ does not induce a dart, $y$ is not adjacent to $a'$. Now,  $\{a',a,y,x,b\}$ induces a fork which is a contradiction. This proves \ref{XY}.

\begin{claim}\label{clX}
$X$ is a clique.
\end{claim}
 Suppose not. Let $x$ and $x'$ be two nonadjacent vertices in $X$.   By \ref{NvX}, $N(X) \subseteq Y$.  Now choose any $y \in Y$ and any $a \in L$ adjacent to $y$. Then, by \ref{NvX} and \ref{XY}, $\{v,a,y,x,x'\}$ induces a fork which is a contradiction. This proves \ref{clX}.

\begin{claim}\label{holeM}
If $C$ is an odd hole or  an odd antihole in $G[M(v)]$, then $V(C)\subseteq Y$.
\end{claim}
By \ref{XY} and \ref{clX}, every vertex in $X$ is  universal in $G[X \cup Y]$. Since odd holes and odd antiholes have no universal vertices, we see that  $V(C)\cap X=\es$. So $V(C)\subseteq Y$. This proves \ref{holeM}.

\begin{claim}\label{LoH}
  Let  $C:=$ $y_1$-$y_2$-$\ldots$-$y_n$-$y_1$ be an odd hole in $G[Y]$. Then every vertex in $L$ which has a neighbor in $C$ is adjacent to exactly two consecutive vertices of $C$.
  \end{claim}
Let $a\in L$.  We may assume that $y_1$ is a neighbor of $a$ in $C$. Then since $\{v,a,y_1,y_2,y_n\}$ does not induce a fork or a dart, we may assume that $a$ is adjacent to $y_2$, and is nonadjacent to $y_n$. Then since $\{a,v,y_1,y_3,y_2\}$ does not induce a dart, $a$ is not adjacent to $y_3$.  If $a$ has a neighbor in $C\setminus\{y_1,y_2\}$, say $y_i$   with the largest index $i$, then $3<i<n$. By the choice of $i$, we have $y_{i+1}\notin N(a)$, and then $\{y_{i+1},y_i,a,y_2,v\}$ induces a fork. Thus $a$ is anticomplete to $C\setminus\{y_1, y_2\}$. Hence every vertex in $L$ which has a neighbor in $C$ is adjacent to exactly two consecutive vertices of $C$. This proves \ref{LoH}.

\begin{claim}\label{MOHfree}
$G[M(v)]$ is $C_{2k+1}$-free, where $k\geq 2$.
\end{claim}
Suppose to the contrary that $G[M(v)]$ contains an odd hole, say  $C:=$ $y_1$-$y_2$-$\ldots$-$y_{2k+1}$-$y_1$.  By \ref{holeM}, $V(C)\subseteq Y$. By \ref{YL}, let $\{a,b,c\}$ be a triad in $L$, and let $a$ and $b$ be the neighbors of $y_1$ in $L$. Then by \ref{LoH}, $\{a,b\}$ is anticomplete to $y_3$, and we may assume that $N(a)\cap V(C) = \{y_1,y_2\}$. If $b$ is adjacent to $y_2$,  then $\{y_2,y_3,a,b,y_1\}$ induces a dart. So we may assume that $b$ is not adjacent to $y_2$. Then by \ref{YL}, $c$ is a neighbor of $y_2$. As earlier, we see that $c$ is not adjacent to $y_1$, and hence by \ref{LoH}, $c$ is adjacent to $y_3$. But now $\{y_3,c,v,a,b\}$ induces a fork which is a contradiction.  This proves \ref{MOHfree}.

\begin{claim}\label{MCOHfree}
$G[M(v)]$ is   $\overline{C_{2k+1}}$-free, where $k\geq 3$.
\end{claim}
Suppose to the contrary that $G[M(v)]$ contains a  $\overline{C_{2k+1}}$, say $C$ with vertices $y_1,y_2,\ldots,y_{2k+1}$ and edges $y_iy_j$ whenever $|i-j|\neq 1$ (indices are modulo $2k+1$).  By \ref{holeM}, $V(C)\subseteq Y$. Let $a \in L$ be a neighbor of $y_2$. Consider any consecutive pair of vertices $y_i, y_{i+1} \in C\setminus \{y_1, y_2, y_3\}$. Then since $\{v,a,y_2,y_i,y_{i+1}\}$ does not induce a fork or a dart, $a$ is adjacent to exactly one of $y_i$, $y_{i+1}$.
   Therefore, if $a$ is adjacent to $y_4$, then $y$ is adjacent to precisely the vertices with even index $i > 3$ in $C$, and if  $a$ is not adjacent to $y_4$, then $y$ is adjacent to precisely the vertices with odd index $i > 3$ in $C$. We may assume that $a$ is adjacent to $y_4$. Then $a$ is not adjacent to $y_{2k+1}$. Then since $y_2$-$y_4$-$y_1$ is a $P_3$, by \ref{NeiP3free}, $a$ is not adjacent to $y_1$.  But then $\{v,a,y_4,y_1,y_{2k+1}\}$ induces a fork which is a contradiction. This proves \ref{MCOHfree}.

\medskip
Now by \ref{MOHfree} and \ref{MCOHfree}, and by Theorem~\ref{thm:spgt}, we conclude that $G[M(v)]$ is perfect. This completes the proof. \end{proof}

\begin{theorem}\label{FDfreeStruc-2}
Let $G$ be a connected (fork,\,dart)-free graph. Then $G$ is either claw-free or for any claw in $G$ with center, say $v$, $G[M(v)]$ is perfect.
\end{theorem}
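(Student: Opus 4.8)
The plan is to obtain this as an immediate consequence of Theorem~\ref{FDfreeStruc-1}. I would argue by cases on whether $G$ contains an induced claw. If $G$ is claw-free, then the first alternative of the statement holds and there is nothing to prove.

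Otherwise $G$ contains at least one claw. I would fix an arbitrary such claw and let $v$ be its center. Then $v$ is a center of a claw in $G$, so $G$ satisfies the hypothesis of Theorem~\ref{FDfreeStruc-1} with respect to $v$; applying that theorem yields that $G[M(v)]$ is perfect. Since the claw was chosen arbitrarily, $G[M(v)]$ is perfect for the center $v$ of every claw of $G$, which is exactly the second alternative. Hence one of the two alternatives always holds.

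I do not expect any real obstacle here: all the structural content has already been established in Theorem~\ref{FDfreeStruc-1}, and Theorem~\ref{FDfreeStruc-2} merely repackages it in the dichotomy form ``$G$ is claw-free, or every claw-center $v$ of $G$ has $G[M(v)]$ perfect,'' which is the form convenient for the later $\chi$-bounding arguments (e.g.\ when combined with Theorem~\ref{pwd=pd} applied to the induced subgraphs of $G$). The only point that needs care is that Theorem~\ref{FDfreeStruc-1} is phrased for a vertex $v$ that happens to be the center of some claw rather than for a pre-specified claw; but this matches precisely what the case analysis above produces, since each claw we pick supplies such a center vertex.
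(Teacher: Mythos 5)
Your proposal is correct and matches the paper exactly: the paper also deduces Theorem~\ref{FDfreeStruc-2} as an immediate consequence of Theorem~\ref{FDfreeStruc-1}, with the trivial dichotomy on whether $G$ contains a claw. Nothing further is needed.
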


\begin{proof} This follows from Theorem~\ref{FDfreeStruc-1}.
\end{proof}

The following corollary generalizes the result known for the class of claw-free graphs (Theorem~\ref{thm:claw-free}).

\begin{cor}
Every (fork,\,dart)-free graph $G$ satisfies $\chi(G)\le \omega(G)^2$.
\end{cor}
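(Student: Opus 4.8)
The plan is to reduce to the connected case and then split according to the dichotomy provided by Theorem~\ref{FDfreeStruc-2}. Since the chromatic number of a graph is the maximum of the chromatic numbers of its components, and $\omega(G)$ is likewise the maximum over components, it suffices to bound $\chi(G)\le\omega(G)^2$ when $G$ is connected. So let $G$ be a connected (fork, dart)-free graph. If $G$ is claw-free, then Theorem~\ref{thm:claw-free} immediately gives $\chi(G)\le\omega(G)^2$ and we are done.

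Otherwise $G$ contains a claw, say with center $v$, and Theorem~\ref{FDfreeStruc-2} tells us that $G[M(v)]$ is perfect. The idea is to produce a cheap perfect divisibility–style partition of $V(G)$ directly: set $A=M(v)$ and $B=N[v]$. Then $G[A]$ is perfect, so $\chi(G[A])=\omega(G[A])\le\omega(G)$. For the other side, $B=N[v]$ induces a graph in which $v$ is a universal vertex, so $\omega(G[B])$ can be as large as $\omega(G)$, which is not quite small enough on its own; instead I would bound $\chi(G[B])$ directly. Observe $G[B]=G[N[v]]$ has a universal vertex $v$, so $\chi(G[N[v]])=\chi(G[N(v)])+1$. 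Now $G[N(v)]$ is again (fork, dart)-free, and moreover $v$ being adjacent to everything in $N(v)$ forces $G[N(v)]$ to be claw-free: any claw in $G[N(v)]$ would, together with $v$, exhibit either a larger structure — actually more carefully, if $\{a,b,c\}$ were a triad inside $N(v)$ with a common neighbor $d\in N(v)$, then $\{v,a,b,c\}$ is a claw and $\{d,v,a,b,c\}$ contains a dart (as $d$ is adjacent to $v$ and to all of $a,b,c$), contradiction. Hence $G[N(v)]$ is claw-free, so by Theorem~\ref{thm:claw-free}, $\chi(G[N(v)])\le\omega(G[N(v)])^2\le(\omega(G)-1)^2$, giving $\chi(G[N[v]])\le(\omega(G)-1)^2+1$.

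Combining, $\chi(G)\le\chi(G[A])+\chi(G[B])\le\omega(G)+(\omega(G)-1)^2+1=\omega(G)^2-\omega(G)+2\le\omega(G)^2$ whenever $\omega(G)\ge 1$ (with equality slack to spare). I should double-check the trivial cases $\omega(G)\le 2$ separately: if $\omega(G)\le 2$ then $G$ is triangle-free and contains no $K_{1,3}$-with-subdivided-edge; more simply, a connected triangle-free graph that contains a claw has $G[M(v)]$ perfect hence bipartite-ish, and in any case $\chi(G)\le 3\le 4=\omega(G)^2$ when $\omega(G)=2$, and $\chi(G)=\omega(G)$ when $\omega(G)\le 1$. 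The main obstacle is the claw-free case feeding into Theorem~\ref{thm:claw-free}; but that is exactly what that theorem is for, so there is really no obstacle — the only care needed is the verification that $G[N(v)]$ is genuinely claw-free, which follows cleanly from dart-freeness as sketched above, and making sure the final arithmetic $\omega(G)^2-\omega(G)+2\le\omega(G)^2$ holds, i.e. $\omega(G)\ge 2$, handling the degenerate small cases by hand.

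Alternatively, and perhaps more in the spirit of the preceding section, one can phrase this as: Theorem~\ref{FDfreeStruc-2} together with the definition of perfect divisibility shows every connected (fork, dart)-free graph is either claw-free or perfectly divisible (take $A=M(v)$, $B=N[v]$, noting $\omega(G[N[v]])<\omega(G)$ fails in general — so this phrasing does not literally work and the direct $\chi$-bounding argument above is the safe route). I would therefore present the direct counting argument rather than invoking perfect divisibility here.
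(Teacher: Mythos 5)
Your reduction to the connected case, the claw-free branch via Theorem~\ref{thm:claw-free}, and the use of Theorem~\ref{FDfreeStruc-2} to get $G[M(v)]$ perfect all match the paper. The genuine gap is your claim that $G[N(v)]$ is claw-free. If $\{a,b,c\}$ is a triad in $N(v)$ and $d\in N(v)$ is adjacent to $a$, $b$, $c$, then $\{v,d,a,b,c\}$ induces the join of the edge $vd$ with the triad $\{a,b,c\}$; this graph has seven edges and is neither a dart nor a fork, and it contains neither as an induced subgraph. An induced dart would require the fifth vertex to be adjacent to the center and to exactly two of the three leaves, whereas here each of $v$ and $d$ is adjacent to all three leaves, which is precisely what kills the dart. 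In fact the five-vertex graph $K_2\vee 3K_1$ itself is (fork, dart)-free, connected, contains a claw centered at a vertex $v$ of the $K_2$, and there $G[N(v)]$ is exactly a claw. So the statement you need is false, and you cannot feed $G[N(v)]$ into Theorem~\ref{thm:claw-free}; as written, the non-claw-free branch of your argument collapses.

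The repair is essentially the paper's argument, which you dismissed at the end only because you tested it with the wrong partition: take $A=\{v\}\cup M(v)$ and $B=N(v)$, not $B=N[v]$. Since $v$ is anticomplete to $M(v)$, $G[A]$ is perfect, so $\chi(G[A])\le\omega(G)$; and every clique of $G[N(v)]$ extends by $v$, so $\omega(G[B])\le\omega(G)-1$. The paper concludes from this partition that $G$ is perfectly divisible and applies Lemma~\ref{lem:pd-bound} to get $\chi(G)\le\binom{\omega(G)+1}{2}\le\omega(G)^2$. Alternatively, your counting scheme can be saved by replacing the false claw-freeness claim with induction on $\omega(G)$: $G[N(v)]$ is again (fork, dart)-free with strictly smaller clique number, so $\chi(G)\le\chi(G[A])+\chi(G[B])\le\omega(G)+(\omega(G)-1)^2\le\omega(G)^2$, with the claw-free case at every stage handled by Theorem~\ref{thm:claw-free}.
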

\begin{proof}
Let $G$ be a (fork,\,dart)-free graph. We may assume that $G$ is connected.
If $G$ is claw-free, then the desired result follows from Theorem~\ref{thm:claw-free}. So let us assume that $G$ contains a claw with center, say $v$. Then by Theorem~\ref{FDfreeStruc-2},  $G[M(v)]$ is perfect. Now since $\omega(G[N(v)])\le \omega(G)-1<\omega(G)$ and since $G[\{v\}\cup M(v)]$ is perfect, we see that $G$ is perfectly divisible, and hence the result follows from Lemma~\ref{lem:pd-bound}.
\end{proof}

\subsection{The class of (fork, co-dart)-free graphs}

In this section, we prove that (fork, co-dart)-free graphs are perfectly divisible, and hence the class of (fork, co-dart)-free graphs is quadratically $\chi$-bounded.

\begin{theorem}\label{thm:fork-pawK1-free}
Let $G$ be a connected (fork, co-dart)-free graph.  Then either $G$ admits a homogeneous set decomposition or
for each vertex $v$ in $G$, $G[M(v)]$ is perfect.
\end{theorem}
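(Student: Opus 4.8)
The plan is to fix a connected (fork, co-dart)-free graph $G$, assume it does not admit a homogeneous set decomposition, and show that for every vertex $v$ the graph $G[M(v)]$ is perfect; by Theorem~\ref{thm:spgt} it suffices to rule out odd holes and odd antiholes in $G[M(v)]$. As in the proof of Theorem~\ref{FP6freeStruc}, if $G[M(v)]$ were not perfect it would contain an induced odd hole $C_n$ ($n\geq 5$) or odd antihole; write $X_0$ for the vertex set of such a configuration, all of whose vertices are nonneighbors of $v$, so $v$ is an anticenter (for holes) or a vertex anticomplete to a long induced structure. The first key step is to understand how a vertex outside $X_0$ can attach to $X_0$: using that $v$ together with two nonadjacent vertices of $X_0$ forms a potential claw-center configuration, and that adding an edge or nonedge in the wrong pattern creates a fork or a co-dart (recall co-dart $\cong\text{paw}\cup K_1$, so the isolated vertex role is played by $v$ and the paw lives near $X_0$), I would show that any vertex mixed on the hole $X_0$ must attach in a very restricted way — ideally that its neighborhood in a $C_5$ or larger odd hole is forced to be empty, two consecutive vertices, or all of $X_0$.

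Next I would run the same ``grow a homogeneous set'' argument as in Theorem~\ref{FP6freeStruc}: starting from $X_0$, repeatedly adjoin a vertex that has both a neighbor and a nonneighbor in the current set, obtaining a maximal set $X$ which is automatically a homogeneous set of $G$. Since $G$ has no homogeneous set, we must have $X = V(G)$, and in particular every vertex of $G$ (including $v$) must eventually get absorbed. The heart of the argument is then to derive a contradiction from the fact that $v$ — which is anticomplete to $X_0$ — can be connected to $X_0$ by a chain of ``mixed'' vertices. I would track a shortest path (or antipath, in the antihole case) from $v$ into $X_0$ through such vertices and locate a short sub-configuration: two consecutive internal vertices of the path together with an appropriate pair from the hole and the vertex $v$ should induce a fork, exactly as in Claim~\ref{ANAXi} of the $(\text{fork},P_6)$ proof, and in the cases where that fails the paw-plus-isolated-vertex pattern should yield a co-dart with $v$ as the isolated vertex.

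The main obstacle I expect is the antihole case and, more generally, handling the hole $C_5$ (which equals $\overline{C_5}$) and $C_7$ cleanly: for short odd holes there is less ``room'' to find the forbidden induced subgraph, and the co-dart (being disconnected) is only useful when one can exhibit a vertex genuinely anticomplete to a paw, so I will need to be careful to keep $v$ in that anticomplete role while the paw is assembled from hole-vertices and an attaching vertex. A secondary technical point is that, unlike in the $P_6$-free setting, $G$ here may contain long induced paths, so I cannot kill configurations by a $P_6$; every contradiction must be extracted using only the fork and the co-dart. I anticipate the bookkeeping — analogous to Claims~\ref{NAX0} and~\ref{ANAXi} but with the co-dart replacing the $P_6$ — to be the most delicate part of the write-up, and I would organize it into two parallel claims, one controlling the neighborhood of the anticenter set and one propagating ``completeness to $N(A)$'' along the growth sequence.
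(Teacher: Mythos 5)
Your overall plan points in a reasonable direction, but as written it has two concrete gaps, and the second one is where essentially all of the work of the theorem lives. First, the odd antihole case is not handled: you acknowledge it as an obstacle but offer no mechanism for it. The paper's proof dispatches it with one observation you are missing: since a co-dart is $\text{paw}\cup K_1$ and $v$ is anticomplete to $M(v)$, the graph $G[M(v)]$ must be paw-free, so it contains no odd antihole $\overline{C_{2k+1}}$ with $k\ge 3$; hence by Theorem~\ref{thm:spgt} the only obstruction to perfection is an odd hole (with $\overline{C_5}\cong C_5$ absorbed into that case). Without this reduction, your ``antipath'' variant of the growth argument is unsubstantiated.

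Second, your key attachment lemma is only conjectured (``ideally \dots empty, two consecutive vertices, or all of $X_0$''), and in the form you guess it is both unproved and too weak to finish: if vertices with exactly two consecutive neighbours on the hole were allowed, the hole would not be homogeneous, and you would then genuinely need the iterative growth argument together with propagation claims analogous to \ref{NAX0} and \ref{ANAXi}; none of these are supplied, and the $P_6$-free proof you lean on uses $P_6$-freeness essentially in exactly the step (a vertex with no two consecutive neighbours on $X_0$) that you would have to redo with only fork and co-dart. What is actually true, and what the paper proves directly, is much stronger and makes the growth machinery unnecessary: for a (shortest) odd hole $C$ with vertex set $S$ in $G[M(v)]$, every vertex of $N(S)$ must be adjacent to $v$ (claim \ref{A-empty}: a nonneighbour of $v$ with two consecutive neighbours on $C$ is forced, via co-darts with $v$ in the isolated role, to be complete to $S$, which itself yields a co-dart; a nonneighbour with no two consecutive neighbours yields a fork), and every vertex of $N(S)$ is then complete to $S$ (claim \ref{A+=Al+}, again using the paw-plus-$v$ pattern with the attacher in the triangle). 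Hence $S$ itself is a homogeneous set, contradicting the hypothesis. So your proposal, to be correct, would need either to prove this stronger attachment statement (after which your chain-tracking and growth steps are superfluous), or to prove the weaker one and then actually carry out the absorption/propagation argument; at present neither is done.
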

\begin{proof}
Suppose to the contrary that $G$ does not admit a homogeneous set decomposition and that there is  a vertex $v$ in $G$ such that $G[M(v)]$ is not perfect. So by  Theorem~\ref{thm:spgt}, $G[M(v)]$ contains an odd hole or an odd antihole. Since $G$ has no co-dart, $G[M(v)]$ is paw-free, and so $G[M(v)]$ has no odd antiholes  except $\overline{C_5}$. So suppose that $G[M(v)]$ contains an odd hole.  Let $C:=$ $v_1$-$v_2$-$\cdots$-$v_{\ell}$-$v_1$ be a shortest odd hole in $G[M(v)]$ for some $\ell\ge 5$ with vertex set $S:=\{v_1,v_2,\ldots, v_\ell\}$.


\begin{claim}\label{A-empty}
If $x\in N(S)$, then $x\in N(v)$.
\end{claim}
Suppose to the contrary that    $x$ is nonadjacent to $v$. First suppose that $x$ has two adjacent neighbours  in $C$. We may assume that $v_1,v_2\in N(x)$. Then since $\{v_1,v_2,v_3,x,v\}$ does not induce a co-dart, we see that $x$ is adjacent to $v_3$. Then by similar arguments, we conclude that $N(x)\cap S = S$. But now $\{v_1,v_2,v_{\ell-1},x,v\}$  induces a co-dart, a contradiction. So suppose that $x$ is nonadjacent to any two consecutive vertices in $C$. Since $x$ has a neighbor in $C$, we may assume that $x$ is adjacent to $v_1$. Then $x$ is nonadjacent to both $v_2$ and $v_{\ell}$. Then since $\{v_{\ell},v_1,v_2,v_3,x\}$ and  $\{v_2,v_1, v_{\ell},v_{\ell-1},x\}$ do not induce forks, $x$ is adjacent to both $v_3$ and $v_{\ell-1}$. Since $x$ is nonadjacent to   two consecutive vertices in $C$, this implies that $\ell\ge 7$, and $x$ is nonadjacent to $v_4$. But now $\{v_{{\ell}-1},x,v_3,v_4,v_2\}$ induces a fork, a contradiction. This proves \ref{A-empty}.


\begin{claim}\label{A+=Al+}
Any vertex in $N(S)$ is complete to $S$.
\end{claim}
 Let  $x$ be a vertex in $N(S)$. Then by \ref{A-empty}, $x$ is adjacent to $v$. We may assume that $x$ is adjacent to $v_1$. Then since $\{v,x,v_1,v_2,v_{\ell}\}$ does not induce a fork, $x$ is adjacent to either $v_2$ or $v_{\ell}$. We may assume that $x$ is adjacent to $v_2$. Then for $j\in \{4,5,\ldots,\ell-1\}$, since $\{v_1,x,v,v_2,v_j\}$ does not induce a co-dart,  $x$ is complete to $\{v_4,v_5,\ldots, v_{\ell-1}\}$. Now suppose to the contrary that $x$ is nonadjacent to one of $v_3$ or $v_{\ell}$, say $v_{\ell}$. Then $x$ is adjacent to $v_3$. For, otherwise if $\ell =5$, then $\{v,x,v_4,v_{\ell},v_3\}$ induces a fork, and if $\ell \ge 7$, then $\{v_5,v_6,x,v,v_3\}$ induces a co-dart which are contradictions. But then $\{v,x,v_2,v_3,v_{\ell}\}$ induces a co-dart, a contradiction. So $x$ is  adjacent to both $v_3$ and $v_{\ell}$, and hence $x$ is complete to $S$.  This proves \ref{A+=Al+}.

\medskip
By \ref{A+=Al+}, we see that $S$ is a homogenous set, a contradiction. This proves Theorem~\ref{thm:fork-pawK1-free}.
\end{proof}

\begin{cor} \label{FCDfreeStruc-2}Every (fork,\,co-dart)-free graph is perfectly
weight divisible, and hence perfectly divisible.\end{cor}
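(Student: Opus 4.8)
The plan is to deduce the corollary from the structure theorem (Theorem~\ref{thm:fork-pawK1-free}) together with Theorems~\ref{thm:hsd} and~\ref{pwd=pd}, by a minimal counterexample argument that mimics the proof of Theorem~\ref{pwd=pd}. Since the ``hence perfectly divisible'' part is immediate (apply perfect weight divisibility to the weight function that is $1$ on the vertex set of the induced subgraph under consideration and $0$ elsewhere; cf.\ Theorem~\ref{pwd=pd}), it suffices to show that every (fork,\,co-dart)-free graph is perfectly weight divisible.

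Suppose this fails, and let $G$ be a (fork,\,co-dart)-free graph that is not perfectly weight divisible with $|V(G)|$ minimum. Since the class of (fork,\,co-dart)-free graphs is hereditary, every proper induced subgraph of $G$ is perfectly weight divisible, so $G$ is a \emph{minimal} non-perfectly-weight-divisible graph. I would first observe that $G$ is connected: if $G$ were the disjoint union of nonempty graphs $G_1$ and $G_2$, then both $G_i$ are perfectly weight divisible, and for any weight function a good partition of $V(G_1)$ together with one of $V(G_2)$ yields a good partition of $V(G)$ --- here one uses that clique weights are integers, so that the strict inequalities on each side combine into a strict inequality for the union. Next, by Theorem~\ref{thm:hsd}, $G$ admits no homogeneous set decomposition, so $G$ falls under the scope of Theorem~\ref{thm:fork-pawK1-free}, which therefore guarantees that $G[M(v)]$ is perfect for \emph{every} vertex $v$ of $G$.

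Now I would run the argument of Theorem~\ref{pwd=pd}. Fix a weight function $w$ witnessing that $G$ is not perfectly weight divisible, set $U=\{x\in V(G):w(x)>0\}$, and put $H=G[U]$. If $U\neq V(G)$, then $H$ is a proper induced subgraph of $G$ and hence perfectly weight divisible; a good partition $V(H)=S'\cup T'$ for $w$ restricted to $V(H)$ then extends to the good partition $S=S'$, $T=T'\cup(V(G)\setminus U)$ of $V(G)$, since the added vertices carry weight $0$ and the maximum clique weight of $H$ equals that of $G$ --- contradicting the choice of $w$. So $U=V(G)$; pick a vertex $v$ with $G[M(v)]$ perfect. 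Then $G[M(v)\cup\{v\}]$ is a perfect graph together with an isolated vertex, hence perfect, while every clique $K$ of $G[N(v)]$ extends to the clique $K\cup\{v\}$ of $G$, so $w(K)$ is strictly smaller than the maximum clique weight of $G$; thus $S=M(v)\cup\{v\}$ and $T=N(v)$ form a good partition --- a final contradiction. This completes the proof.

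I do not anticipate a real obstacle: the substantive content is entirely contained in Theorem~\ref{thm:fork-pawK1-free}, and the remainder is bookkeeping. The only point requiring attention is to make sure the minimal counterexample lies inside the hypotheses of Theorem~\ref{thm:fork-pawK1-free} --- namely, that it is connected and admits no homogeneous set decomposition --- which is precisely what the disjoint-union reduction and Theorem~\ref{thm:hsd} supply; and, as a routine matter, one should keep the weight-function conventions consistent with those used in the proof of Theorem~\ref{pwd=pd}.
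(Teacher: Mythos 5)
Your proof is correct and takes essentially the same route as the paper: a minimal counterexample admits no homogeneous set decomposition by Theorem~\ref{thm:hsd}, so Theorem~\ref{thm:fork-pawK1-free} supplies a vertex $v$ with $G[M(v)]$ perfect, and the argument of Theorem~\ref{pwd=pd} then gives a contradiction. The only difference is that you unpack the proof of Theorem~\ref{pwd=pd} inline (and verify connectedness explicitly) instead of citing it as a black box, which if anything makes the application of Theorem~\ref{thm:fork-pawK1-free} to the minimal counterexample slightly more careful than the paper's own wording.
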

\begin{proof}
Let $G$ be a minimal counterexample to the theorem. Then, by Theorem~\ref{thm:hsd}, $G$ does not admit a homogeneous set decomposition. So, by Theorem~\ref{thm:fork-pawK1-free}, there is a vertex $v$ in $G$ such that $G[M(v)]$ is perfect. Then,  by Theorem~\ref{pwd=pd}, it follows that $G$ is perfectly weight divisible, and hence perfectly divisible, a contradiction. This proves Corollary~\ref{FCDfreeStruc-2}. \end{proof}

\begin{cor}
Every (fork,\,co-dart)-free graph $G$ satisfies $\chi(G)\le \binom{\omega(G)+1}{2}$.
\end{cor}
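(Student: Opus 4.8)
The plan is simply to chain together the two facts already established in the excerpt. By Corollary~\ref{FCDfreeStruc-2}, every (fork,\,co-dart)-free graph is perfectly divisible (in fact perfectly weight divisible), so the chromatic bound follows at once from Lemma~\ref{lem:pd-bound} applied to such a $G$. Since the class of (fork,\,co-dart)-free graphs is hereditary and both cited statements already quantify over all members of their respective classes, there is nothing further to check about induced subgraphs.

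For completeness one can recall why perfect divisibility yields the bound of Lemma~\ref{lem:pd-bound}: given a perfectly divisible graph $G$, argue by induction on $\omega(G)$, the case $\omega(G)\le 1$ being trivial. Partition $V(G)$ into $A$ and $B$ so that $G[A]$ is perfect and $\omega(G[B])<\omega(G)$. Then $\chi(G[A])=\omega(G[A])\le\omega(G)$, while by the induction hypothesis $\chi(G[B])\le\binom{\omega(G[B])+1}{2}\le\binom{\omega(G)}{2}$ since $\omega(G[B])+1\le\omega(G)$. Coloring $A$ and $B$ with disjoint palettes gives $\chi(G)\le\omega(G)+\binom{\omega(G)}{2}=\binom{\omega(G)+1}{2}$.

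There is no real obstacle in proving the corollary itself: all the content lies in Theorem~\ref{thm:fork-pawK1-free} and Corollary~\ref{FCDfreeStruc-2}, which reduce the chromatic bound to perfect divisibility, and in Lemma~\ref{lem:pd-bound}, which is quoted. The corollary is recorded only to make the quadratic $\chi$-binding function for this class explicit.
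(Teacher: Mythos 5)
Your proposal is correct and is exactly the paper's argument: the corollary is deduced by combining Corollary~\ref{FCDfreeStruc-2} with Lemma~\ref{lem:pd-bound}, and your included sketch of why perfect divisibility gives the bound $\binom{\omega(G)+1}{2}$ is a correct (optional) expansion of that cited lemma.
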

\begin{proof}This follows from Corollary~\ref{FCDfreeStruc-2}, and from Lemma~\ref{lem:pd-bound}. \end{proof}

\subsection{The class of (fork, banner)-free graphs}

In this section, we prove that (fork, banner)-free graphs are either claw-free or perfectly
 divisible, and hence the class of (fork, banner)-free graphs is quadratically $\chi$-bounded. We use the following lemma.

\begin{lemma}[\cite{BLM}]\label{lem:B-free}
If $G$ is a banner-free graph that does not admit a homogeneous set decomposition, then $G$ is $K_{2,3}$-free.
\end{lemma}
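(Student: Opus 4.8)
The plan is to prove the contrapositive: if $G$ is banner-free and contains an induced $K_{2,3}$, then $G$ has a homogeneous set. Fix an induced $K_{2,3}$ with independent pair $\{a_1,a_2\}$ and independent triple $T=\{b_1,b_2,b_3\}$ (all six edges $a_ib_j$ present), and set $S=N(a_1)\cap N(a_2)$, so $T\subseteq S$. First I would record two local facts, each proved by exhibiting an explicit banner. \textbf{(1)} If a vertex $v\notin\{a_1,a_2\}\cup T$ has both a neighbour and a non-neighbour in $T$, then $v$ is adjacent to both $a_1$ and $a_2$: if $v$ misses exactly one, say $v\sim b_1,b_2$, $v\not\sim b_3$, then for any $i$ with $v\not\sim a_i$ the set $\{a_i,b_1,b_2,b_3,v\}$ induces a banner; if $v$ misses two, say $v\sim b_1$, $v\not\sim b_2,b_3$, a short check on whether $v\sim a_i$ again yields a banner on five of these vertices (via the claw centred at $b_1$, resp. at $a_i$). \textbf{(2)} If $v$ is adjacent to exactly one of $a_1,a_2$, say $v\sim a_1$, $v\not\sim a_2$, then $v$ is complete to $T$: by (1) it is not mixed on $T$, and if it missed, say, $b_1$ and $b_2$, then $\{v,a_1,b_1,a_2,b_2\}$ would be a $C_4$ ($a_1b_1a_2b_2$) with pendant $v$ at $a_1$, i.e. a banner.

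From (1), any vertex mixed on $T$ lies in $S$; from (2), every vertex of $N(a_1)\triangle N(a_2)$ is complete to $T$; and $a_1,a_2$ are complete to $T$. Hence \emph{the only vertices that can be mixed on $T$ lie in $S$}, so if no vertex of $S$ is mixed on $T$ then $T$ is a homogeneous set and we are done. It remains to handle a vertex $w\in S$ mixed on $T$, which is where the real work is. Here I would use the rotation $w\mapsto b_i$ (when $w\sim b_1$, $w\not\sim b_2,b_3$, the triple $\{w,b_2,b_3\}$ is again independent and complete to $\{a_1,a_2\}$, so the base $K_{2,3}$ may be re-chosen), combined with a closure argument in the style of the proof of Theorem~\ref{FP6freeStruc}: start from $T$ and repeatedly absorb any vertex mixed on the current set, obtaining a maximal — hence automatically homogeneous — set $X$; by the facts above the first absorbed vertex lies in $S$, and the job is to show the closure never engulfs all of $V(G)$, i.e. that $a_1,a_2$ (or another witness) stay outside $X$. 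Organising the vertices of $S$ by their trace on $T$ and exploiting the non-edge $a_1a_2$, the maximality of $X$, and one more round of banner-exclusion, one shows this; in the boundary configurations $X$ typically collapses onto a pair of twins (e.g. a separating vertex $w$ together with the $b_i$ it sees).

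The main obstacle is precisely this last step. Facts (1)--(2) are clean exactly because a vertex mixed on $T$ but adjacent to \emph{both} $a_1$ and $a_2$ need not create a banner at all; so once the closure begins absorbing such vertices of $S$, the easy local arguments are exhausted and one must bring in the global no-homogeneous-set hypothesis together with a careful, case-heavy bookkeeping of how the vertices of $S$ attach to $T$ and to one another (a natural first split is according to whether $G[S]$ is connected). Everything outside that step reduces to verifying that a bounded list of specified five-vertex configurations are not banners.
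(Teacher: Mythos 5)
Your preliminary facts (1) and (2) are correct, and the reduction they give is fine: every vertex mixed on $T=\{b_1,b_2,b_3\}$ must lie in $S=N(a_1)\cap N(a_2)$, so if no vertex of $S$ is mixed on $T$ then $T$ itself is homogeneous. But the lemma is not proved by these observations; its entire content lies in the case you explicitly leave open, namely a common neighbour $w$ of $a_1,a_2$ that is mixed on $T$. For that case you offer only a plan (``rotation'' of the $K_{2,3}$, a closure argument ``in the style of Theorem~\ref{FP6freeStruc}'', and ``careful, case-heavy bookkeeping''), ending with ``one shows this'' --- no actual argument is given that the closure of $T$ stops short of $V(G)$, nor which homogeneous set emerges in the boundary configurations. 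That is a genuine gap, not a routine verification: note that the paper itself does not prove this statement but imports it from the Brandst\"adt--Lozin--Mosca paper \cite{BLM}, so you are attempting to reconstruct a nontrivial external result, and the reconstruction stops exactly where the work starts.

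Moreover, the specific route you sketch is in danger of failing, not merely of being incomplete. Facts (1)--(2) do not exclude vertices that are complete to $T$ but adjacent to neither $a_1$ nor $a_2$ (such a vertex sees only the two nonadjacent vertices $b_i,b_j$ on each $4$-cycle $a_1$-$b_i$-$a_2$-$b_j$, which creates no banner). Such a vertex $u$ is not in $S$, yet once some $S$-vertex nonadjacent to $u$ has been absorbed, $u$ becomes mixed on the current set and enters the closure; after that $a_1$ and $a_2$ themselves acquire non-neighbours inside the closure and can be absorbed too, so your intended witnesses need not stay outside $X$. So the claim ``the closure never engulfs all of $V(G)$'' is not just unproved, it is not true for the naive closure started at $T$ without further control on which vertices are absorbed; a correct proof has to either restrict the growth (and prove the restricted set is still homogeneous) or locate a different homogeneous set in the problematic configurations. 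As it stands, the proposal establishes only the easy half of the argument and asserts the hard half.
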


\begin{theorem}\label{FBfreeStruc}
Let $G$ be a (fork,\,banner)-free graph that contains a claw. Then either $G$ admits a homogeneous set decomposition or there is a vertex $v$ in $G$ such that $G[M(v)]$ is perfect.
\end{theorem}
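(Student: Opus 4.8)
The plan is to mimic the structure of the (fork,\,dart)-free and (fork,\,co-dart)-free arguments: assume $G$ admits no homogeneous set decomposition, pick a claw with center $v$, and show $G[M(v)]$ is perfect by forbidding odd holes and odd antiholes inside $M(v)$. Since $G$ contains a claw with center $v$, the leaf set $L$ of claws centered at $v$ contains a triad. By Lemma~\ref{lem:B-free}, the hypothesis that $G$ has no homogeneous set gives us that $G$ is $K_{2,3}$-free, which is the extra structural tool available here (in contrast to the dart case, where we did not need it). If $G[M(v)]$ is not perfect, then by Theorem~\ref{thm:spgt} it contains an odd hole $C_n$ or odd antihole $\overline{C_n}$; I would take $C$ to be a \emph{shortest} such configuration with vertex set $S$, say $C = v_1\text{-}v_2\text{-}\cdots\text{-}v_\ell\text{-}v_1$.

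First I would analyze how a vertex of $N(S)$ attaches to $C$, exactly as in Claims~\ref{A-empty} and \ref{A+=Al+}: using that $\{y, v_{i-1}, v_i, v_{i+1}, \text{(leaf or }v)\}$ cannot induce a fork or a banner, I expect to force each vertex of $N(S)$ to be complete to $S$, which would make $S$ a homogeneous set and contradict the hypothesis. The obstruction is that a banner is a weaker ``local obstruction'' than a dart (a banner has no claw), so the fork alone may not suffice to propagate ``$y$ has a neighbor in $C$'' to ``$y$ is complete to $C$''; this is precisely where $K_{2,3}$-freeness should be invoked, since two vertices each adjacent to three pairwise-nonadjacent vertices of a long hole produce a $K_{2,3}$. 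Concretely I would argue: if $y \in N(S)$ is mixed on $S$, look at a maximal run of consecutive neighbors of $y$ in $C$; the endpoints of the run together with $v$ (or a suitable leaf of $L$) yield a fork or a banner unless the run is all of $C$; the $K_{2,3}$ constraint rules out the remaining awkward case of $y$ being adjacent to an alternating-type pattern when $\ell \geq 7$.

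The second main step is to rule out odd antiholes $\overline{C_n}$ with $n \geq 7$ in $M(v)$. Here I would use that $\overline{C_n}$ for $n\geq 7$ contains a $K_{2,3}$ (three pairwise-nonadjacent vertices each complete to a fixed nonadjacent pair), so together with Lemma~\ref{lem:B-free} such antiholes cannot occur in $G$ at all; the only surviving antihole is $\overline{C_5} \cong C_5$, already covered by the odd-hole analysis. So it remains only to handle the odd hole $C_\ell$, $\ell \geq 5$, which the first step disposes of. Thus $G[M(v)]$ has no odd hole and no odd antihole, hence is perfect by Theorem~\ref{thm:spgt}.

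I expect the \textbf{main obstacle} to be the mixed-vertex analysis in the first step: unlike the dart case, the presence of a leaf $a \in L$ with $\{v,a\}$ complete or anticomplete in various patterns to $C$ must be tracked carefully, and one has to be sure that the triad in $L$ provides enough nonneighbors of $v$ to realize the forbidden fork/banner at every position of the hole. A secondary subtlety is checking the boundary case $\ell = 5$ separately (since then ``consecutive'' and ``at distance two'' coincide cyclically), as was done in Claim~\ref{A+=Al+}. Once the complete-to-$S$ conclusion is reached for all of $N(S)$, the homogeneous-set contradiction is immediate, and the theorem follows.
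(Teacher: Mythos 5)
Your plan has two concrete gaps. First, the step ruling out odd antiholes is based on a false claim: $\overline{C_n}$ for $n\geq 7$ does \emph{not} contain a $K_{2,3}$. Since $C_n$ is triangle-free for $n\geq 4$, the complement $\overline{C_n}$ has independence number $2$, whereas $K_{2,3}$ (like the fork and the banner) contains a triad; indeed $\overline{C_7}$ is simultaneously fork-free, banner-free and $K_{2,3}$-free, so none of your hypotheses excludes long odd antiholes from $G$, let alone from $M(v)$. Any exclusion of antiholes would have to exploit the claw at $v$ itself, as is done for the dart case in \ref{MCOHfree}, and you have not supplied such an argument. Second, the proposed propagation ``mixed on $S$ forces a fork, banner or $K_{2,3}$, hence every vertex of $N(S)$ is complete to $S$'' does not go through: a vertex adjacent to exactly two consecutive vertices of an odd hole and to nothing else in $S$ creates no claw at all in $S\cup\{x\}$, hence no fork and no banner, and no $K_{2,3}$, regardless of its adjacency to $v$. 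The co-dart argument (Claims \ref{A-empty} and \ref{A+=Al+}) works precisely because nonadjacency to the anticenter $v$ turns a paw near the hole into paw $\cup K_1$; the banner has no such disconnected structure, so the analogue fails, and the ``awkward cases'' you defer to $K_{2,3}$-freeness are exactly the ones that survive. You would instead need a full attachment analysis in the spirit of the (fork, co-cricket) proof, which is substantially more work and is not sketched.

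For comparison, the paper avoids both issues by a different and much shorter route: it never looks at odd holes or antiholes in $M(v)$. Choose $v$ maximizing $\alpha(G[N(v)])$ and let $L$ be a maximum stable set in $N(v)$ (so $|L|\geq 3$ because $G$ has a claw). Using fork-, banner- and $K_{2,3}$-freeness one shows $M(v)$ is anticomplete to $L$ (a vertex of $M(v)$ with one, two, or three neighbors in a triad of $L$ gives a fork, a banner, or a $K_{2,3}$, respectively), and then that any component of $M(v)$ containing an edge would be complete or anticomplete to every vertex of $N[v]$, hence a homogeneous set. So under your standing assumption $M(v)$ is a stable set, which is trivially perfect; Theorem~\ref{thm:spgt} is not needed at this point. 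If you want to salvage your approach, the missing ingredients are (i) a correct treatment of odd antiholes via the claw at $v$, and (ii) a genuine case analysis for vertices with exactly two consecutive neighbors on the hole, but the cleaner fix is to switch to a choice of $v$ that makes $M(v)$ degenerate, as the paper does.
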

\begin{proof}
Let $G$ be a (fork,\,banner)-free graph that contains a claw.  Suppose that $G$ does not admit a homogeneous set decomposition. Then $G$ is connected, and, by Lemma~\ref{lem:B-free}, we may  assume that $G$ is $K_{2,3}$-free. Let $v$ be a vertex in $G$ such that $\alpha(G[N(v)])$ is maximized. Let $L$ be a maximum stable set in $N(v)$, and let $Q$ denote the set $N(v)\setminus L$. Since $G$ contains a claw, we see that $|L|\geq 3$ and so $L$ has a triad.

\begin{claim}\label{L-NA-M}
$M(v)$ is anticomplete to $L$.
\end{claim}
Suppose $x\in M(v)$ has a neighbor $a$ in a triad $\{a,b,c\}\subseteq L$.
Then since $\{v,a,b,c,x\}$ does not induce  an $K_{2,3}$ or a banner, $x$ is not adjacent to $b$ and $c$. But then  $\{v,a,b,c,x\}$ induces a fork, a contradiction. This proves \ref{L-NA-M}.

\begin{claim}\label{M-Cliques}
$G[M(v)]$ is a stable set.
\end{claim}
Suppose to the contrary that $G[M(v)]$ has a   component, say $C$ with more than one vertex. Then, by \ref{L-NA-M}, $V(C)$ is anticomplete to $L$.  Let $x,y\in V(C)$ be neighbors, and suppose $t\in Q$ is adjacent to $x$. Then $t$ is adjacent to at least two vertices in any given triad $\{a,b,c\}\subseteq L$ (otherwise, $G[\{x,t,v,a,b,c\}]$ contains a fork, a contradiction). We may assume $a,b\in N(t)$. Then since $\{y,x,t,a,b\}$ does not  induce a fork, $t$ is adjacent to $y$. Thus we conclude that every vertex in $N[v]$ is either complete or anticomplete to $V(C)$, and so $V(C)$ is a homogeneous set, a contradiction to our assumption. This proves \ref{M-Cliques}.

\medskip
Now it follows from \ref{M-Cliques} that $G[M(v)]$ is perfect.  This completes the proof.
\end{proof}

\begin{cor}\label{FBfreeStruc-2}
Let $G$ be a   (fork,\,banner)-free graph.  Then  either  $G$ is claw-free or $G$ admits a homogeneous set decomposition   or there is a vertex $v$ in $G$ such that $G[M(v)]$ is perfect.
\end{cor}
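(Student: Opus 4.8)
The plan is to reduce the corollary immediately to Theorem~\ref{FBfreeStruc} via a trivial dichotomy on the presence of a claw. If $G$ is claw-free, then the first alternative in the statement holds and there is nothing to prove, so I would assume that $G$ contains a claw. Since $G$ is (fork,\,banner)-free and contains a claw, Theorem~\ref{FBfreeStruc} applies exactly as stated and yields that either $G$ admits a homogeneous set decomposition or there is a vertex $v$ in $G$ with $G[M(v)]$ perfect; these are precisely the two remaining alternatives. Thus the corollary follows in essentially one line.

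The only point worth flagging is that no separate connectivity discussion is needed. Theorem~\ref{FBfreeStruc} carries no connectedness hypothesis, and in any case a disconnected graph is either edgeless (hence claw-free) or has a nontrivial component which, being anticomplete to the rest of the graph, is a homogeneous set; so the dichotomy on ``contains a claw'' is genuinely exhaustive. All of the real work has already been carried out in the proof of Theorem~\ref{FBfreeStruc} — the choice of a vertex $v$ maximizing $\alpha(G[N(v)])$, the use of $K_{2,3}$-freeness via Lemma~\ref{lem:B-free}, and Claims such as \ref{L-NA-M} and \ref{M-Cliques} — so there is no genuine obstacle in the corollary itself, which is purely a repackaging of that theorem. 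Consequently I would keep the write-up to a single sentence: if $G$ is not claw-free it contains a claw, and then Theorem~\ref{FBfreeStruc} gives the conclusion.
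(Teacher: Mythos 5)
Your proposal is correct and matches the paper exactly: the corollary is stated to follow from Theorem~\ref{FBfreeStruc} by the trivial dichotomy on whether $G$ contains a claw. The extra connectivity remark is harmless but unnecessary, since Theorem~\ref{FBfreeStruc} indeed carries no connectedness hypothesis.
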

\begin{proof}This follows from Theorem~\ref{FBfreeStruc}. \end{proof}

\begin{cor} \label{FBfreeStruc-3}Let $G$ be a (fork,\,banner)-free graph.  Then either $G$ is claw-free or $G$ is perfectly
weight divisible, and hence perfectly divisible.\end{cor}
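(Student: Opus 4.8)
The plan is to prove the equivalent statement that every (fork, banner)-free graph which contains a claw is perfectly weight divisible; the claw-free alternative is then trivially the other half of the dichotomy, and ``perfectly divisible'' follows from ``perfectly weight divisible'' as observed above. The argument would closely parallel the proof of Corollary~\ref{FCDfreeStruc-2}, with Corollary~\ref{FBfreeStruc-2} replacing Theorem~\ref{thm:fork-pawK1-free} as the structural input.

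Concretely, I would suppose for contradiction that the claim fails and let $G$ be a (fork, banner)-free graph that is not claw-free, chosen with $|V(G)|$ minimum subject to not being perfectly weight divisible; so $G$ contains a claw, and every proper induced subgraph of $G$ is claw-free or perfectly weight divisible, which I would leverage so as to treat $G$ as a minimal non-perfectly-weight-divisible graph. By Theorem~\ref{thm:hsd}, $G$ then admits no homogeneous set decomposition. Since $G$ is (fork, banner)-free, contains a claw, and has no homogeneous set, Corollary~\ref{FBfreeStruc-2} (which unwinds into Theorem~\ref{FBfreeStruc}, itself using Lemma~\ref{lem:B-free} to pass to the $K_{2,3}$-free case) provides a vertex $v$ with $G[M(v)]$ perfect.

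It then remains to convert ``$G[M(v)]$ is perfect'' into perfect weight divisibility of $G$, which is exactly the step already packaged in Theorem~\ref{pwd=pd}. Given any nonnegative integer weight function $w$ on $V(G)$, put $S=\{v\}\cup M(v)$ and $T=N(v)$. Because $v$ is anticomplete to $M(v)$ by definition and $G[M(v)]$ is perfect, $G[S]$ is perfect. If $w(v)>0$, then appending $v$ to a maximum-weight clique of $G[T]=G[N(v)]$ produces a clique of $G$ of strictly larger weight, so the maximum clique weight in $G[T]$ is smaller than in $G$, and $(S,T)$ is the required partition; if $w(v)=0$, I would pass to the induced subgraph on the positive-weight vertices (a proper subgraph, since it omits $v$), obtain a good partition there, and put all zero-weight vertices into $T$. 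Either way $G$ is perfectly weight divisible, a contradiction, which finishes the proof.

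The delicate point is the first reduction. Once $G$ is known to have no homogeneous set, the structural step and the partition construction are routine; but to reach that point one needs a minimal counterexample to behave like a minimal non-perfectly-weight-divisible graph, so that Theorem~\ref{thm:hsd} applies. In other words, the induction must be organized so that the homogeneous-set blow-down used in the proof of Theorem~\ref{thm:hsd} still goes through even though the inductive hypothesis only supplies ``claw-free or perfectly weight divisible'' for the smaller graphs; handling the homogeneous-set case (and, in the step above, the passage to the positive-weight subgraph) is where the interplay between the dichotomy and the hereditary behaviour of perfect weight divisibility has to be used with care.
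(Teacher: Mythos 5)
Your reconstruction follows exactly the route the paper takes (its stated proof is nothing more than the citation of Theorems~\ref{thm:hsd} and \ref{pwd=pd} together with Corollary~\ref{FBfreeStruc-2}), but the ``delicate point'' you flag at the end is a genuine gap, not a technicality, and your write-up does not close it. To invoke Theorem~\ref{thm:hsd} you need $G$ to be a minimal non-perfectly-weight-divisible graph, i.e.\ \emph{every} proper induced subgraph of $G$ must be perfectly weight divisible. Your minimality assumption only yields that every proper induced subgraph is claw-free \emph{or} perfectly weight divisible, and the claw-free branch gives nothing: since the claw is an induced subgraph of both the fork and the banner, every claw-free graph is automatically (fork, banner)-free, and perfect (weight) divisibility of claw-free graphs is precisely the open special case of the conjecture recorded in the concluding section of the paper. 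The same problem recurs in your $w(v)=0$ step, where you need the positive-weight induced subgraph to be perfectly weight divisible and again it may merely be claw-free. So Theorem~\ref{thm:hsd} cannot be applied to your minimal counterexample and the induction never gets started. (Contrast this with Corollary~\ref{FCDfreeStruc-2}, where the conclusion has no claw-free escape clause, so minimality really does produce a minimal non-perfectly-weight-divisible graph.)

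Moreover, this gap cannot be repaired from the quoted results, because the statement taken literally would settle that open case: for an arbitrary claw-free graph $H$, the disjoint union $G=H\cup K_{1,3}$ is (fork, banner)-free (both forbidden graphs are connected and contain a claw, and $K_{1,3}$ has only four vertices) and is not claw-free, so the dichotomy would make $G$, and hence its induced subgraph $H$, perfectly divisible. What Theorems~\ref{thm:hsd} and \ref{pwd=pd} and Corollary~\ref{FBfreeStruc-2} actually yield is the weaker assertion that a minimal non-perfectly-weight-divisible (fork, banner)-free graph must be claw-free (equivalently, $G$ is perfectly weight divisible provided all of its claw-free induced subgraphs are): take a minimal non-perfectly-weight-divisible induced subgraph $H$ of $G$; by Theorem~\ref{thm:hsd} it has no homogeneous set, so by Corollary~\ref{FBfreeStruc-2} either $H$ is claw-free or some vertex $v$ has $H[M_H(v)]$ perfect, and the latter is impossible by your own partition argument, since by minimality one may assume all weights on $H$ are positive. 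Your proposal, like the paper's one-line justification, silently identifies this weaker statement with the stated dichotomy; flagging the difficulty, as you do, is not the same as resolving it, so the proof is incomplete as it stands.
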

\begin{proof}This follows from Theorems~\ref{thm:hsd} and \ref{pwd=pd}, and from Corollary~\ref{FBfreeStruc-2}. \end{proof}

\begin{cor}
Every (fork,\,banner)-free graph $G$ satisfies $\chi(G)\le \omega(G)^2$.
\end{cor}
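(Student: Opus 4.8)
The plan is to deduce this directly from the structural dichotomy in Corollary~\ref{FBfreeStruc-3}, together with the two $\chi$-bounds already recorded, namely Theorem~\ref{thm:claw-free} and Lemma~\ref{lem:pd-bound}. First I would observe that both $\chi$ and $\omega$ are determined componentwise, so it suffices to establish the bound when $G$ is connected; I may therefore assume connectivity and apply Corollary~\ref{FBfreeStruc-3}, which tells me that $G$ is either claw-free or perfectly weight divisible (in particular, perfectly divisible).

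In the first case, with $G$ claw-free, Theorem~\ref{thm:claw-free} gives $\chi(G)\le\omega(G)^2$ with nothing further to do. In the second case, with $G$ perfectly divisible, Lemma~\ref{lem:pd-bound} gives $\chi(G)\le\binom{\omega(G)+1}{2}$, and I would finish with the elementary observation that
\[
\binom{\omega(G)+1}{2}=\frac{\omega(G)\bigl(\omega(G)+1\bigr)}{2}\le\omega(G)^2,
\]
which holds because $\omega(G)+1\le 2\omega(G)$ for every graph (this remains valid, indeed trivial, when $\omega(G)\le 1$). Combining the two cases yields $\chi(G)\le\omega(G)^2$, as claimed.

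I do not expect any genuine obstacle here: all of the real work has already been carried out in Theorem~\ref{FBfreeStruc} and Lemma~\ref{lem:B-free}, so the present statement is a routine corollary whose only content is the numeric inequality $\binom{\omega+1}{2}\le\omega^2$ and the remark that passing to connected components is harmless. The one point I would double-check is that Corollary~\ref{FBfreeStruc-3} does not require $G$ to contain a claw --- it does not, since the claw-free alternative is built into its statement --- so the reduction applies verbatim to every connected (fork,\,banner)-free graph.
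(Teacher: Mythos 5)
Your argument is correct and is exactly the paper's proof: the paper derives this corollary from Corollary~\ref{FBfreeStruc-3}, Theorem~\ref{thm:claw-free}, and Lemma~\ref{lem:pd-bound}, with the same implicit inequality $\binom{\omega+1}{2}\le\omega^2$. The connectivity reduction you add is harmless but unnecessary, since Corollary~\ref{FBfreeStruc-3} is stated for arbitrary (fork, banner)-free graphs.
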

\begin{proof}This follows from  Corollary~\ref{FBfreeStruc-3}, Theorem~\ref{thm:claw-free}, and from Lemma~\ref{lem:pd-bound}. \end{proof}

\subsection{The class of (fork, co-cricket)-free graphs}

In this section, we prove that (fork, co-cricket)-free graphs are either claw-free or  perfectly
 divisible, and hence  the class of (fork, co-cricket)-free graphs is quadratically $\chi$-bounded.

\begin{theorem}\label{thm:fork-diamondK1-free}
Let $G$ be a  (fork, co-cricket)-free graph. Then either $G$ is claw-free or $G$ admits a homogeneous set decomposition or for each vertex $u$ in $G$, $G[M(u)]$ is perfect.
\end{theorem}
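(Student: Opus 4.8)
The plan is to adapt the argument of Theorem~\ref{thm:fork-pawK1-free}, with the extra ``claw-free'' alternative absorbing the configurations that the co-cricket hypothesis alone cannot kill. If $G$ is claw-free we are in the first alternative, so assume $G$ contains a claw, and assume also that $G$ does not admit a homogeneous set decomposition; then $G$ is connected and we must show $G[M(u)]$ is perfect for every $u\in V(G)$. Suppose not, and fix $u$ with $G[M(u)]$ imperfect. Since $u$ is anticomplete to $M(u)$ and $G$ is co-cricket-free (recall co-cricket $\cong$ diamond $\cup\, K_1$), the graph $G[M(u)]$ is diamond-free. Every odd antihole $\overline{C_n}$ with $n\ge 7$ contains a diamond and $\overline{C_5}\cong C_5$, so by Theorem~\ref{thm:spgt} the graph $G[M(u)]$ contains an odd hole; let $C:=v_1\text{-}v_2\text{-}\cdots\text{-}v_\ell\text{-}v_1$ be a shortest one, with $\ell\ge 5$ and vertex set $S$.

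The goal is then to prove that $S$ is a homogeneous set, contradicting our assumption. First I would analyze how a vertex $x\in M(u)$ can see $C$: since $C$ is a \emph{shortest} odd hole, the usual rerouting argument restricts $N(x)\cap S$ to a short interval of $C$; diamond-freeness of $G[M(u)]$ forbids three consecutive neighbours of $x$ on $C$, and fork-freeness eliminates the remaining small cases (for instance, if $N(x)\cap S=\{v_i\}$ then $\{v_{i-2},v_{i-1},v_i,v_{i+1},x\}$ induces a fork with centre $v_i$, and the distance-two case is similar). Carrying this through should give $N(S)\subseteq N(u)$, i.e.\ no vertex of $M(u)\setminus S$ has a neighbour in $C$. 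The second step is to show that every $x\in N(u)$ with a neighbour in $C$ is complete to $S$: mirroring Claim~\ref{A+=Al+} of Theorem~\ref{thm:fork-pawK1-free}, one starts from a single edge $xv_i$, uses fork-freeness to force $x$ adjacent to a pair of consecutive vertices of $C$ (or outright to all of $C$), propagates this to three consecutive vertices $v_a,v_{a+1},v_{a+2}\in N(x)$, which form a diamond with $x$, and then observes that a vertex of $S$ far from $\{v_a,v_{a+1},v_{a+2}\}$ missed by $x$ would be anticomplete to that diamond, yielding a co-cricket; hence $x$ is complete to $S$. Combining the two steps, $N(S)$ is complete to $S$, so $S$ is a homogeneous set, a contradiction.

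The step I expect to be the main obstacle is the second one, because the co-cricket behaves very differently from the co-dart: in Theorem~\ref{thm:fork-pawK1-free} a single triangle $\{v_i,v_{i+1},x\}$ near the hole together with an anticomplete vertex is already a co-dart ($=$ paw $\cup\, K_1$), whereas here one genuinely needs a diamond, hence \emph{three} consecutive neighbours of $x$ on $C$, and pushing $x$'s adjacencies that far is delicate. A vertex meeting $C$ in just one edge, or a vertex of $M(u)$ meeting $C$ in two consecutive vertices, induces only permitted subgraphs (a bull, or a triangle with a pendant path), and the case $\ell=5$ is special since there are then no ``far'' vertices of $S$ to feed the co-cricket; these residual configurations must be cleared either by a more careful local analysis or by invoking that $G$ has a claw and no homogeneous set, which is precisely why the statement records the two extra alternatives. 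Once the theorem is established, the bound $\chi(G)\le\omega(G)^2$ follows as in the previous subsections from Theorems~\ref{thm:hsd}, \ref{pwd=pd}, \ref{thm:claw-free} and Lemma~\ref{lem:pd-bound}.
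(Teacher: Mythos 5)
Your opening reductions are fine (diamond-freeness of $G[M(u)]$, hence an odd hole, and taking a shortest one), but the core of your plan --- proving that $S$ is a homogeneous set --- cannot be carried out, and this is exactly where your proposal stops short. A vertex adjacent to precisely two consecutive vertices $v_j,v_{j+1}$ of $C$ creates only a triangle with a pendant path or a bull together with nearby hole vertices, and no fork or co-cricket; such vertices are therefore perfectly consistent with all the hypotheses, and they are mixed on $S$. This is reflected in the paper's own proof: the analysis there (Claim~\ref{clm:Cneigh}, followed by the elimination of the ``complete to $S$'' vertices in Claim~\ref{clm:Xemp} via a homogeneous set, and of the exceptional $\ell=5$ type in Claims~\ref{clm:ZnonneighC}--\ref{clm:Zemp} via connectedness through $x\in M(S)$) terminates not with ``$S$ is homogeneous'' but with the weaker statement \ref{mainclm} that \emph{every} neighbour of $S$ sees exactly two consecutive vertices of $C$. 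Your first step is also off: the paper never shows (and does not need) $N(S)\subseteq N(u)$, and a vertex of $M(u)$ meeting $C$ in two consecutive vertices is again unobstructed, so the ``rerouting'' argument you sketch will not deliver that inclusion.

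After \ref{mainclm}, the paper's contradiction comes from the hypothesis you list only as a fallback: $G$ contains a claw. Roughly half of the published proof is a case analysis of how a claw $K$ can sit relative to $C$ --- sharing two vertices, one vertex (centre or leaf), no vertices but with a neighbour of $C$ in $K$, and finally $K$ entirely far from $C$, where a shortest path from $C$ to $K$ together with connectedness and \ref{mainclm} produces a fork or a co-cricket. Your proposal explicitly flags these ``residual configurations'' as the main obstacle and suggests they ``must be cleared either by a more careful local analysis or by invoking that $G$ has a claw and no homogeneous set,'' but supplies no argument for them. Since the statement is false without that final claw analysis (the two-consecutive-neighbour vertices genuinely occur), this is a real gap rather than a routine verification, and the proposal as written does not prove the theorem.
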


\begin{proof}
Let $G$ be a (fork, co-cricket)-free graph. Suppose to the contrary that none of the assertions hold. We may assume that $G$ is connected.  Let $x$ be a vertex in $G$ such that $G[M(x)]$ is not perfect. Since $G$ is co-cricket-free, we see that  $G[M(x)]$ does not contain a diamond, and hence does not contain an  odd antihole except $\overline{C_5}$. So by Theorem~\ref{thm:spgt}, $G[M(x)]$ contains an odd hole. Let $C:=$ $v_1$-$v_2$-$\cdots$-$v_{\ell}$-$v_1$ be a shortest odd hole in $G[M(x)]$ for some $\ell\ge 5$, and let $S$ denote the vertex set of $C$.

\begin{claim}\label{clm:aC}
If $v$ is a vertex in $G$ which has three consecutive neighbors in $C$, then $v$ is complete to $S\cup M(S)$.
\end{claim}
We may assume that $v$ is adjacent to the vertices $v_1$, $v_2$ and $v_3$.  Now if there is a vertex in $M(S)$ that is nonadjacent to $v$, say $a$, then $\{v_1,v_2,v_3,v, a\}$ induces a co-cricket, a contradiction. So $v$ is complete to $M(S)$. In particular, $v$ is adjacent to $x$. Next suppose to the contrary that $v$ is not complete to $C$.  Let $k \in \{4,5,\ldots, \ell\}$ be the least positive integer such that $v$ is adjacent to $v_{k-1}$, and $v$ is nonadjacent to $v_k$. Now if $k\neq \ell$, then $\{x,v,v_{k-1},v_k,v_1\}$ induces a fork, and if $k= \ell$, then $\{v_{\ell},v_{\ell-1},v,x, v_3\}$ induces a fork, a contradiction. So $v$ is complete to $S$. This proves \ref{clm:aC}.

\begin{claim}\label{clm:Cneigh}
Let $v$ be a vertex in $G$ which has a  neighbor in $C$. Then the following hold:
\begin{enumerate}
 \item[(a)] If  $\ell =5$, then  $N(v)\cap S$ is either $\{v_j, v_{j+1}\}$ or  $\{v_j, v_{j+1}, v_{j+3}\}$ or $S$, for some $j\in \{1,2,\ldots,5\}, j$ mod $5$.
 \item[(b)] If  $\ell \ge 7$, then  $N(v)\cap S$ is either $\{v_j, v_{j+1}\}$ or  $S$, for some $j\in \{1,2,\ldots, \ell\}$, $j$ mod $\ell$.
     \end{enumerate}
\end{claim}
If $v$ has three consecutive vertices of $C$ as neighbors, then by \ref{clm:aC}, $N(v)\cap S = S$, and we conclude the proof. So we may assume that no three consecutive vertices of $C$ are neighbors of $v$.

Now suppose that \ref{clm:Cneigh}:$(a)$ does not hold.  So by our assumption, there is an index $j\in \{1,2,\ldots,5\}$, $j$ mod $5$ such that $v$ is adjacent to $v_j$, and $v$ is anticomplete to $\{v_{j+1},v_{j-1},v_{j-2}\}$. Then $\{v_{j-2},v_{j-1},v_j,v_{j+1},v\}$ induces a fork, a contradiction. So \ref{clm:Cneigh}:$(a)$ holds.

Next suppose that \ref{clm:Cneigh}:$(b)$ does not hold. First let us assume that no two consecutive vertices of $C$ are neighbors of $v$. Since $v$ has a neighbor in $C$, we may assume that $v$ is adjacent to $v_1$. By assumption, $v$ is not adjacent to both $v_2$ and $v_{\ell}$. Then since $\{v_3,v_2,v_1,v_{\ell},v\}$ does not induce a fork, $v$ is adjacent to $v_3$. Likewise, $v$ is adjacent to $v_{\ell -1}$. Also by our assumption, $v$ is not adjacent to $v_4$. Now $\{v_4,v_3,v,v_1,v_{\ell-1}\}$ induces a fork, a contradiction. So we may assume that there is an index $j\in \{1,2,\ldots, \ell\}$, $j$ mod $\ell$ such that $v$ is adjacent to both $v_j$ and $v_{j+1}$, say $j=\ell$.  Moreover, by our contrary assumption, $v$ has a neighbor in $\{v_3,v_4,\ldots, v_{\ell-2}\}$. Also by our earlier arguments, $v$ is nonadjacent to both $v_2$ and $v_{\ell-1}$.  Suppose that $v$ has a neighbor in $\{v_3,v_4,\ldots, v_{\lceil\frac{\ell}{2}\rceil-1}\}$; let $t$ be the least possible integer in $\{3, 4,\ldots, \lceil\frac{\ell}{2}\rceil-1\}$ such that $v$ is adjacent to $v_t$. Now since $\{v_{\ell-1},v_{\ell},v,x,v_t\}$ does not induce a fork, $v\in M(x)$. Then since $v$-$v_1$-$v_2$-$\cdots$-$v_t$-$v$ is not an odd hole in $G[M(x)]$ which is shorter than $C$, we see that $t$ is odd.
Also since $\{v_{\ell},v,v_t,v_{t-1},v_{t+1}\}$ does not induce a fork, $v$ is adjacent to $v_{t+1}$. So by our assumption, $v$ is nonadjacent to $v_{t+2}$. Moreover, since $v$-$v_{t+1}$-$v_{t+2}$-$\cdots$-$v_{\ell-1}$-$v_{\ell}$-$v$ is not an odd hole in $G[M(x)]$ which is shorter than $C$, $v$ has a neighbor in $\{v_{t+3},v_{t+4},\ldots, v_{\ell-2}\}$, say $v_k$. But now $\{v_2,v_1,v,v_{t+1},v_k\}$ induces a fork, a contradiction. Thus, by using symmetry, we may assume that $v$ has no neighbor in $\{v_2,v_3,\ldots, v_{\lceil\frac{\ell}{2}\rceil-1}, v_{\lceil\frac{\ell}{2}\rceil+1},\ldots, v_{\ell -1}\}$. So by our contrary assumption, $v$ is adjacent to  $v_{\lceil\frac{\ell}{2}\rceil}$. But then $\{v_1,v,v_{\lceil\frac{\ell}{2}\rceil}, v_{\lceil\frac{\ell}{2}\rceil-1}, v_{\lceil\frac{\ell}{2}\rceil+1}\}$ induces a fork, a contradiction. So \ref{clm:Cneigh}:(b) holds.  This  proves \ref{clm:Cneigh}.

\medskip
Let $X$ be the set $\{v\in V(G)\setminus S\mid N(v)\cap S=S\}$, and let $Y$ be the set $\{v\in V(G)\setminus S \mid N(v)\cap S = \{v_j, v_{j+1}\}, \mbox{ for some }  j\in \{1,2,\ldots,\ell\},  j  \mod  \ell  \}$. Moreover, if $\ell=5$, then let $Z$ be the set
$\{v\in V(G)\setminus S \mid N(v)\cap S = \{v_j, v_{j+1}, v_{j+3}\}, \mbox{ for some }  j\in \{1,2,\ldots,\ell\},  j  \mod  \ell\}$, otherwise let $Z=\emptyset$. Then by \ref{clm:Cneigh}, we immediately have the following assertion.

\begin{claim}\label{clm:neiS}
$N(S) = X\cup Y\cup Z$, and so $V(G)= S\cup X\cup Y\cup Z\cup M(S)$.
\end{claim}

\begin{claim}\label{clm:Xemp}
$X = \emptyset$.
\end{claim}
 Suppose to the contrary that $X$ is nonempty. We claim that $X$ is complete to $V(G)\setminus X = S\cup Y\cup Z\cup M(S)$.  Suppose there are nonadjacent vertices, say $p\in X$ and $q\in V(G)\setminus X$. Recall that, by \ref{clm:aC}, $X$ is complete to $S\cup M(S)$; in particular, $x$ is complete to $X$. So $q\in Y\cup Z$.  Then by our definitions of $Y$ and $Z$, there is an index $j\in \{1,2,\ldots,\ell\},  j  \mod  \ell$ such that $q$ is complete to $\{v_{j},v_{j+1}\}$, and anticomplete to $\{v_{j-1},v_{j+2}\}$, say $j=1$. Now if $q$ is adjacent to $x$, then $\{q,x,p,v_3,v_{\ell}\}$ induces a fork, and if $q$ is nonadjacent to $x$, then $\{q,v_1,p,x,v_3\}$ induces a fork. These contradictions show that $X$ is complete to $Y\cup Z$. Thus   $X$ is complete to $V(G)\setminus X$. But then  since $S\subseteq V(G)\setminus X$, we see that $V(G)\setminus X$ is a homogeneous set in $G$, which is a contradiction. This proves \ref{clm:Xemp}.

\begin{claim}\label{clm:ZnonneighC}
$Z$ is anticomplete to $M(S)$.
\end{claim}
Suppose to the contrary that there are adjacent vertices, say  $p\in Z$ and $q\in M(S)$. Since $p\in Z$, by the definition of $Z$, we may assume that    $\ell=5$, and   there is an index $j\in  \{1,2,\ldots,5\},  j  \mod  5$  such that $N(p)\cap S = \{v_j, v_{j+1}, v_{j+3}\}$, say $j=1$.
But now $\{q,p,v_4,v_5,v_3\}$ induces a fork, a contradiction.  This proves \ref{clm:ZnonneighC}.

\begin{claim}\label{clm:neiCZ}
If $Z \not =  \emptyset$, then $N(S)=Z$.
\end{claim}
 Let $p\in Z$. Then by our definition of $Z$, we may assume that $\ell=5$, and so there is an index $j\in  \{1,2,\ldots,5\},  j  \mod  5$  such that $N(p)\cap S = \{v_j, v_{j+1}, v_{j+3}\}$, say $j=1$. Moreover, by \ref{clm:ZnonneighC}, $p$ is nonadjacent to $x$.  Recall that  $N(S)=X\cup Y\cup Z$, and by \ref{clm:Xemp}, we know that $X=\emptyset$. So we show that $Y=\emptyset$. Suppose to the contrary that $Y$ is nonempty, and let $q\in Y$. Then by the definition of $Y$, there is an index $k\in  \{1,2,\ldots,5\},  k  \mod  5$  such that $N(q)\cap S = \{v_k, v_{k+1}\}$. Then, up to symmetry, we have three cases:

 \no $\bullet$ $k=1$:    If $p$ is adjacent to $q$, then $\{q,p,v_4,v_5,v_3\}$ induces a fork, a contradiction, and so $p$ is nonadjacent to $q$. Then since $\{p,v_1,q,v_2,x\}$ does not induce a co-cricket, $q$ is adjacent to $x$. But then $\{x,q,v_1,v_5,p\}$ induces a fork, a contradiction.

 \no $\bullet$ $k=2$: If $p$ is nonadjacent to $q$, then $\{q,v_3,v_4,v_5,p\}$ induces a fork, a contradiction, and so $p$ is  adjacent to $q$. Then since $\{p,q,v_1,v_2,x\}$ does not induce a co-cricket, $q$ is adjacent to $x$. But then $\{x,q,p,v_1,v_4\}$ induces a fork, a contradiction.

 \no $\bullet$ $k=3$: If $p$ is nonadjacent to $q$, then $\{v_5,v_4,p,v_2,q\}$ induces a fork, a contradiction, and so $p$ is  adjacent to $q$. Then since $\{p,q,v_3,v_4,x\}$ does not induce a co-cricket, $q$ is adjacent to $x$. But then $\{v_1,p,q,v_3,x\}$ induces a fork,  a contradiction.

The above contradictions show that $Y$ is empty. This proves \ref{clm:neiCZ}.

 \begin{claim}\label{clm:Zemp}
 $Z=\emptyset$.
\end{claim}
Suppose to the contrary that $Z$ is nonempty.  So $\ell=5$. Moreover, by \ref{clm:neiCZ}, we have $N(S)=Z$, and by \ref{clm:ZnonneighC}, $Z$ is anticomplete to $M(S)$. But then, since $x\in M(S)$, we conclude that the graph is not connected, a contradiction.  This proves \ref{clm:Zemp}.

\medskip
Now by \ref{clm:neiS}, \ref{clm:Xemp} and \ref{clm:Zemp}, we conclude that $N(S) = Y$, and hence we have the following.

\begin{claim}\label{mainclm}
If $v$ is a vertex in $G$ which has a  neighbor  in $C$, then there is an index $j\in \{1,2,\ldots, \ell\}$, $j$ mod $\ell$ such that $N(v)\cap S= \{v_j, v_{j+1}\}$.
\end{claim}

Now let $K$ be an induced
claw with vertex set $\{a,b, c, d\}$ and edge set $\{ab,ac,$ $ad\}$. By \ref{mainclm}, $K$
cannot have more than two vertices on $C$. Also, at most one vertex in $\{b,c,d\}$ belongs to $C$. Then, up to symmetry, we have the following cases.

\smallskip
\no $\bullet$ $V(K)\cap S= \{a,d\}$: Let $a=v_1$ and $d=v_{\ell}$. Then by \ref{mainclm}, $bv_2,cv_2\in E$. But then again by \ref{mainclm}, $\{v_{\ell-1},d,a,b,c\}$ induces a fork, a contradiction.

\smallskip
\no $\bullet$ $V(K)\cap S= \{a\}$: Let $a=v_1$. Then by \ref{mainclm}, at least two vertices in $\{b,c,d\}$ are adjacent to either $v_2$ or $v_{\ell}$, say $b$ and $c$ are adjacent to $v_2$. Then again by \ref{mainclm}, $\{v_4,v_3,v_2, b,c\}$ induces a fork, a contradiction.

\smallskip
\no $\bullet$ $V(K)\cap S= \{d\}$:
Let $d=v_1$. Then by \ref{mainclm}, we may assume that $av_2\in E$. Then by \ref{mainclm}, since $\{v_{\ell},d,a,b,c\}$ does not induce a fork, $v_{\ell}$ has a neighbor in  $\{b,c\}$.  Also, to avoid an induced claw with center in $C$, $v_{\ell}$ has a nonneighbor in  $\{b,c\}$. So we may assume that  $v_{\ell}$ is adjacent to $b$, and nonadjacent to $c$. Thus by \ref{mainclm}, $N(b)\cap S =\{v_{\ell-1}, v_{\ell}\}$. Then since $\{v_3, v_2,a,b,c\}$ does not induce a fork, by \ref{mainclm}, $c$ is adjacent to $v_3$. But then, by \ref{mainclm}, $\{v_3,c, a, b,d\}$ induces a fork which is a contradiction.

\smallskip
 \no $\bullet$ \emph{$V(K)\cap S= \es$ and $a$ has a  neighbor on $C$}: By \ref{mainclm}, we may assume that $N(a)\cap S =\{v_1,v_2\}$. To avoid an induced claw intersecting $C$, both $v_1$ and $v_2$ have exactly two neighbors among $b,c,d$, and thus we may assume that $v_1$ is adjacent to $b$ and $c$, and not adjacent to $d$. Again  to avoid an induced claw intersecting $C$, exactly one of $b,c$ is adjacent to $v_{\ell}$, say $b$ is adjacent to $v_{\ell}$, and so by \ref{mainclm}, $N(b)\cap S = \{v_1, v_{\ell}\}$. Moreover, by \ref{mainclm}, $N(c)\cap S = \{v_1, v_2\}$. Then since $\{v_3, v_2, a,b,d\}$ does not induce a fork, by \ref{mainclm}, $d$ is adjacent to $v_3$. But then $\{v_3,d,a,b,c\}$ induces a fork, a contradiction.

\smallskip
 \no $\bullet$ \emph{$V(K)\cap S= \es$ and $b$ has a  neighbor on $C$}: By \ref{mainclm}, we may assume that $N(b)\cap S =\{v_1,v_2\}$, and we may assume that $a$ has no neighbors on $C$.  Then since $\{v_1,b,a,c,d\}$ does not induce a fork, we may assume that $c$ is adjacent to $v_1$. Then to avoid an induced claw intersecting $C$, $c$ is adjacent to $v_{\ell}$, and so by \ref{mainclm}, $N(c)\cap S =\{v_1,v_{\ell}\}$. Then since $\{v_2,b,a,c,d\}$ does not induce a fork, $d$ is adjacent to $v_2$. So by \ref{mainclm}, $d$ is not adjacent to $v_{\ell}$. But then $\{v_{\ell},c,a,b,d\}$ induces a fork,  a contradiction.

\smallskip
 \no $\bullet$ \emph{$V(K)\cap S= \es$ and no vertex of $K$ has a neighbor on $C$}: Since $G$ is connected,
there exists a $j\in \{1,2,\ldots,\ell\}$ and a shortest path $p_1$-$p_2$-$\cdot$-$p_t$-$a$, say $P$, such that $t\ge 2$, $v_j=p_1$,  and $p_2$ has a neighbor on $C$. By the choice of $P$, no vertex of this path has a neighbor on $C$ except $p_2$. We may assume that $j=1$. Then by  \ref{mainclm}, we may further assume that $N(p_2)\cap S = \{v_1,v_2\}$. If $p_t=b$, then $\{v_{\ell},v_1(=p_1),p_2,\ldots,p_t(=b), a,c,d\}$ induces a fork, a contradiction. So we may assume that $p_t\notin\{b,c,d\}$. Now if $p_t$ has two or more neighbors in $\{b,c,d\}$, then $\{a,b,c,d,p_t,v_{\ell}\}$ induces a graph containing a  co-cricket, a contradiction, and if $p_t$ has  exactly one neighbor in $\{b,c,d\}$, say $b$ or if $p_t$ has  no neighbor in $\{b,c,d\}$, then $\{a,c,d,p_t,p_{t-1}\}$ induces a fork, a contradiction.

This completes the proof of the theorem.
\end{proof}

\begin{cor} \label{FCCfreeStruc-2}Let $G$ be a (fork,\,co-cricket)-free graph.  Then either $G$ is claw-free or $G$ is perfectly
weight divisible, and hence perfectly divisible.\end{cor}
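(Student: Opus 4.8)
The plan is to read this off from the structure theorem just proved, Theorem~\ref{thm:fork-diamondK1-free}, via Theorems~\ref{thm:hsd} and \ref{pwd=pd}, in exactly the way Corollaries~\ref{FCDfreeStruc-2} and \ref{FBfreeStruc-3} were obtained from the corresponding structure theorems; since all the real work is already in Theorem~\ref{thm:fork-diamondK1-free}, what remains is routine assembly.

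I would argue by minimal counterexample. Suppose some (fork, co-cricket)-free graph is neither claw-free nor perfectly weight divisible, and let $G$ be one with $|V(G)|$ minimum. One checks that every proper induced subgraph of $G$ is perfectly weight divisible: a proper induced subgraph that contains a claw is perfectly weight divisible by the minimality of $G$, and a claw-free one is perfectly weight divisible by the (easy) fact that claw-free, hence fork-free, co-cricket-free graphs are perfectly weight divisible. Thus $G$ is a minimal non-perfectly-weight-divisible graph, so Theorem~\ref{thm:hsd} tells us that $G$ admits no homogeneous set decomposition. Since $G$ is also not claw-free, the only alternative left in Theorem~\ref{thm:fork-diamondK1-free} is that $G[M(u)]$ is perfect for every vertex $u$ of $G$.

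Finally I would run the partition argument behind Theorem~\ref{pwd=pd}. Given a nonnegative integer weight $w$ on $V(G)$ admitting no valid partition, put $U=\{x\in V(G):w(x)>0\}$. If $U\neq V(G)$, then $G[U]$ is a proper induced subgraph, hence perfectly weight divisible, and a valid partition of $(G[U],w|_U)$ extends to $G$ by dumping the weight-zero vertices onto the clique-weight-lowering side. If $U=V(G)$, pick a vertex $u$; then $G[M(u)\cup\{u\}]$ is $G[M(u)]$ together with the isolated vertex $u$ and so is perfect, while $u$ is complete to $N(u)$ with $w(u)>0$, so every clique of $G[N(u)]$ extends to a strictly heavier clique of $G$; hence $(M(u)\cup\{u\},\,N(u))$ is a valid partition, a contradiction. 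Perfect divisibility then follows by specializing to the all-ones weight, as noted after Theorem~\ref{pwd=pd}. The one point that needs care — and the only thing distinguishing this from the co-dart case, where Theorem~\ref{thm:fork-pawK1-free} has no claw-free escape clause — is handling the claw-free alternative: one must know that claw-free (equivalently, (claw, co-cricket)-free) members of the class are perfectly weight divisible, so that the reduction to a minimal non-perfectly-weight-divisible graph goes through and Theorem~\ref{thm:hsd} can be invoked to discard homogeneous sets.
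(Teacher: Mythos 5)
Your overall architecture (minimal counterexample, Theorem~\ref{thm:hsd} to rule out homogeneous sets, Theorem~\ref{thm:fork-diamondK1-free} to get every $G[M(u)]$ perfect, then the partition argument behind Theorem~\ref{pwd=pd}) is the intended route, and you have correctly located the delicate point. But the way you dispose of that point is a genuine gap: you assert, as an ``easy fact,'' that claw-free members of the class --- i.e.\ (claw,\,co-cricket)-free graphs --- are perfectly weight divisible. This is proved nowhere in the paper and is not easy: perfect divisibility of claw-free graphs is precisely the special case of the fork-free conjecture that the concluding section singles out as open, and co-cricket-freeness does not obviously rescue it (in the diamond-free subcase one might try to go through line graphs of triangle-free graphs and the Proposition on line graphs, but that gives only unweighted perfect divisibility, and the case where a diamond is present is untouched). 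Note also that if your ``easy fact'' were available, the corollary could be strengthened to ``every (fork,\,co-cricket)-free graph is perfectly divisible,'' with no claw-free alternative --- something the authors conspicuously do not claim. Without it, your reduction collapses: a proper induced subgraph of your minimal counterexample that happens to be claw-free is not known to be perfectly weight divisible, so you cannot conclude that $G$ is a minimal non-perfectly-weight-divisible graph, and Theorem~\ref{thm:hsd} cannot be invoked at that point.

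For comparison, the paper's own one-sentence proof does not supply this missing fact either, so you cannot justify the step by appeal to ``the same assembly as before'': what Theorems~\ref{thm:hsd}, \ref{pwd=pd} and \ref{thm:fork-diamondK1-free} yield directly is that every \emph{minimal} non-perfectly-weight-divisible (fork,\,co-cricket)-free graph is claw-free, equivalently, that $G$ is perfectly weight divisible provided all of its claw-free induced subgraphs are; upgrading this to the stated dichotomy is exactly what your unproved assertion was meant to do. Two smaller remarks: in the third alternative of Theorem~\ref{thm:fork-diamondK1-free} the conclusion holds for \emph{every} vertex $u$, so for a graph in that case one can simply pick a vertex $u$ of positive weight and use the partition $(M(u)\cup\{u\},\,N(u))$ --- no minimality or support trick is needed there, the whole difficulty sits in the claw-free/homogeneous-set interplay; and specializing to the all-ones weight only handles $H=G$ in the definition of perfect divisibility --- for an arbitrary induced subgraph $H$ one should use the $0$--$1$ indicator weight of $V(H)$.
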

\begin{proof}This follows from Theorems~\ref{thm:hsd} and \ref{pwd=pd}, and from Theorem~\ref{thm:fork-diamondK1-free}. \end{proof}

\begin{cor}
Every (fork,\,co-cricket)-free graph $G$ satisfies $\chi(G)\le \omega(G)^2$.
\end{cor}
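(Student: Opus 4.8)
The plan is to derive this corollary purely as bookkeeping on top of the structural dichotomy already proved for $(\text{fork},\text{co-cricket})$-free graphs, using the two $\chi$-bounds recorded in the preliminaries: the quadratic bound for claw-free graphs (Theorem~\ref{thm:claw-free}) and the bound $\chi(H)\le\binom{\omega(H)+1}{2}$ for perfectly divisible graphs (Lemma~\ref{lem:pd-bound}). First I would reduce to the connected case: the class of $(\text{fork},\text{co-cricket})$-free graphs is hereditary, hence closed under taking components, and since $\chi(G)=\max_i\chi(G_i)$ and $\omega(G)=\max_i\omega(G_i)$ over the components $G_i$ of $G$, it suffices to prove $\chi(G)\le\omega(G)^2$ when $G$ is connected.

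Assuming $G$ connected, I would invoke Corollary~\ref{FCCfreeStruc-2}, which says that $G$ is either claw-free or perfectly divisible, and split into these two cases. If $G$ is claw-free, then Theorem~\ref{thm:claw-free} gives $\chi(G)\le\omega(G)^2$ immediately. If instead $G$ is perfectly divisible, then Lemma~\ref{lem:pd-bound} gives $\chi(G)\le\binom{\omega(G)+1}{2}=\tfrac{1}{2}\omega(G)(\omega(G)+1)$, and this is at most $\omega(G)^2$ for every positive integer $\omega(G)$, since $\tfrac{1}{2}\omega(\omega+1)\le\omega^2$ is equivalent to $\omega+1\le 2\omega$, i.e.\ to $\omega\ge 1$. (The degenerate case of an edgeless graph, where $\omega(G)=1$ and $\chi(G)=1$, is also covered.) Combining the two cases finishes the proof.

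I do not expect any real obstacle here: all the substantive work has already been carried out in Theorem~\ref{thm:fork-diamondK1-free} and Corollary~\ref{FCCfreeStruc-2}, which supply the case analysis reducing a $(\text{fork},\text{co-cricket})$-free graph to a claw-free one or a perfectly divisible one. The only points requiring even a moment's care are the reduction to connected graphs (valid because the class is hereditary) and the elementary numerical comparison $\binom{\omega+1}{2}\le\omega^2$ that upgrades the perfect-divisibility bound to the slightly weaker but cleaner quadratic bound stated in the corollary. This mirrors exactly the proofs of the analogous corollaries for the $(\text{fork},\text{dart})$-free and $(\text{fork},\text{banner})$-free classes given earlier in the paper.
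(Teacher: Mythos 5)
Your proof is correct and follows exactly the paper's route: the paper likewise derives this corollary directly from Corollary~\ref{FCCfreeStruc-2}, Theorem~\ref{thm:claw-free}, and Lemma~\ref{lem:pd-bound}, with the same case split and the same comparison $\binom{\omega+1}{2}\le\omega^2$. Your preliminary reduction to connected graphs is harmless but unnecessary, since Corollary~\ref{FCCfreeStruc-2} is stated for all (fork,\,co-cricket)-free graphs.
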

\begin{proof}This follows from Corollary~\ref{FCCfreeStruc-2}, Theorem~\ref{thm:claw-free},  and Lemma~\ref{lem:pd-bound}. \end{proof}

\subsection{The class of (fork, bull)-free graphs}

In this section, we observe that (fork, bull)-free graphs are perfectly divisible, and hence  the class of (fork, bull)-free graphs is quadratically $\chi$-bounded. We use the following theorem.

\begin{theorem}[\cite{KM}]\label{lem:FBull-free}
If $G$ is a (fork,\,bull)-free graph that does not admit a homogeneous set decomposition, then for every vertex $v$ in $G$, $G[M(v)]$ is odd hole-free and $\overline{P_5}$-free, and hence perfect.
\end{theorem}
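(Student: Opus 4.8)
The plan is to argue by contradiction. Suppose $G$ is (fork,\,bull)-free, has no homogeneous set, yet some vertex $v$ has $G[M(v)]$ not odd-hole-free or not $\overline{P_5}$-free; I will exhibit a homogeneous set of $G$. First observe that the parenthetical ``hence perfect'' is free: by Theorem~\ref{thm:spgt} a graph is perfect iff it has no odd hole and no odd antihole, every odd antihole $\overline{C_{2k+1}}$ with $k\ge 3$ contains an induced $\overline{P_5}$ (because $C_{2k+1}$ contains an induced $P_5$), and $\overline{C_5}\cong C_5$ is an odd hole; hence a graph with no odd hole and no induced $\overline{P_5}$ is perfect. So it suffices to produce a homogeneous set in each of the two cases, and we may assume $G$ is connected (a disconnected $G$ meeting the hypotheses must be edgeless and small, for which the conclusion is trivial).

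Consider first the case that $G[M(v)]$ contains an odd hole, and fix a shortest one $C := $ $v_1$-$v_2$-$\cdots$-$v_\ell$-$v_1$, so $\ell\ge 5$; let $A$ be the set of vertices anticomplete to $V(C)$. Since $C\subseteq M(v)$ we have $v\in A$, so $A\ne\es$. The crux is the structural claim that \emph{every vertex $w$ with a neighbor on $C$ has three consecutive neighbors on $C$}: if $w$ had two consecutive neighbors $v_i,v_{i+1}$ but $v_{i-1},v_{i+2}\notin N(w)$ then $\{w,v_{i-1},v_i,v_{i+1},v_{i+2}\}$ induces a bull, and if $w$ had a neighbor but no two consecutive neighbors then, starting from the claw with center $v_i$ and leaves $w,v_{i-1},v_{i+1}$ and chasing it around $C$ (using that $\ell$ is odd and that the diagonals of $C_\ell$ are non-edges) one reaches a fork. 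Granting this, I claim no vertex of $N(A)$ has both a neighbor and a non-neighbor on $C$: if $a\in A$, $b\sim a$, and $b$ is mixed on $V(C)$, let $v_p$ be the left end of a maximal run of at least three consecutive neighbors of $b$; then $\{b,a,v_p,v_{p+2}\}$ induces a claw with center $b$ (using $a$ anticomplete to $C$), $v_{p-1}$ is a pendant on the leaf $v_p$, and $\{v_{p-1},v_p,b,a,v_{p+2}\}$ induces a fork, a contradiction. Now grow a set $W$: put $W_0=V(C)$, repeatedly adjoin a vertex having both a neighbor and a non-neighbor in the current set, and let $W$ be a maximal such set. By maximality every vertex outside $W$ is complete or anticomplete to $W$, so $W$ is a homogeneous set provided $W\ne V(G)$; and an induction patterned on the proof of Theorem~\ref{FP6freeStruc} shows every vertex adjoined is anticomplete to $A$ (a vertex mixed on $V(C)$ has a proper run of $\ge3$ consecutive neighbors and so, by the fork argument, is anticomplete to $A$; a vertex adjoined while anticomplete to $V(C)$ would lie in $A$ yet be adjacent to an earlier such vertex, which is impossible; a vertex complete to $V(C)$ that is adjoined is handled using that a vertex complete to $C$ and adjacent to some $a\in A$ is forced complete to every vertex with a proper run on $C$, and that two non-adjacent vertices both complete to $C$ already give a homogeneous set). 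Hence $W\cap A=\es$, so $W\ne V(G)$, and $W$ is the desired homogeneous set.

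The remaining case, $G[M(v)]$ containing an induced $\overline{P_5}$ on a set $H$ (with $v$ anticomplete to $H$), follows the same template: one classifies, using fork- and bull-freeness, how a vertex outside $H$ can attach to the fixed five-vertex graph $\overline{P_5}$, shows the ``partial'' attachments are anticomplete to the set of vertices anticomplete to $H$, and then grows a homogeneous set from $H$. I expect the main obstacle to be, in both cases, the treatment of vertices \emph{complete} to the witness $C$ (respectively $H$): these are precisely the vertices for which the fork/bull arguments do not immediately yield a contradiction, so one must show separately that they neither violate homogeneity of $W$ nor drag a vertex of $A$ into $W$ during the growth. A secondary nuisance is that the structural claim must be verified by hand in the low-girth case $\ell=5$ (and for the small graph $\overline{P_5}$), where several diagonals of $C_\ell$ collapse, so the claw/fork identifications need care. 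The overall argument closely mirrors the proofs of Theorem~\ref{FP6freeStruc} and Theorem~\ref{thm:fork-diamondK1-free} above.
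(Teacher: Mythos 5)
First, a point of comparison: the paper does not prove this statement at all --- Theorem~\ref{lem:FBull-free} is quoted from Karthick and Maffray \cite{KM} --- so you are attempting to (re)prove a cited result rather than reconstruct an argument in the text. Your template (grow a homogeneous set from a bad witness inside $M(v)$, as in Theorem~\ref{FP6freeStruc}) is sensible, and the local pieces you actually verify are correct: the bull/fork analysis showing that every vertex with a neighbour on a shortest odd hole $C\subseteq G[M(v)]$ has three consecutive neighbours on $C$, and the fork showing that a vertex mixed on $V(C)$ is anticomplete to the set $A$ of vertices anticomplete to $C$, both check out (including the $\ell=5$ adjacencies).

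However, the proposal has two genuine gaps. The first is precisely the case you flag as the main obstacle: vertices complete to $V(C)$. Your induction that $W$ never absorbs a vertex of $A$ needs every adjoined vertex to be anticomplete to $A$, and a vertex $b$ complete to $C$ and adjacent to some $a\in A$ yields no fork or bull by itself: in any five-vertex subset of $V(C)\cup\{a,b\}$ containing $b$, the vertex $b$ has degree four, while the fork and the bull have maximum degree three. You propose to dispose of this case via the claim that two non-adjacent vertices complete to $C$ ``already give a homogeneous set,'' but you offer no proof, and no local forbidden-subgraph argument can supply one: the graph consisting of $C_5$, two non-adjacent vertices complete to it, and a further vertex adjacent to exactly those two contains no induced fork and no induced bull (only diamonds and $K_{2,3}$'s), so the claim needs a genuinely global construction --- it is in effect a special case of the very theorem you are proving, which makes the argument circular at its crux. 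The second gap is that the $\overline{P_5}$ half of the conclusion is not argued at all; you only say it ``follows the same template,'' while conceding that the analogous hard sub-case (vertices complete to the $\overline{P_5}$) would again require separate treatment. As written, neither conclusion of the theorem is established; in the context of this paper the correct move is simply to cite \cite{KM}, or else to supply complete arguments for the vertices complete to $C$ and for the $\overline{P_5}$ case.
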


\begin{cor} \label{FBullfreeStruc-2}Let $G$ be a (fork,\,bull)-free graph.  Then   $G$ is perfectly
weight divisible, and hence perfectly divisible.\end{cor}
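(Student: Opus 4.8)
The plan is to follow the same template already used in this paper for the classes of (fork,\,co-dart)-free and (fork,\,co-cricket)-free graphs, namely to combine the external structural result (Theorem~\ref{lem:FBull-free}, from \cite{KM}) with Theorems~\ref{thm:hsd} and~\ref{pwd=pd}. Let $\cal C$ denote the class of all (fork,\,bull)-free graphs; this class is hereditary. I would argue by contradiction: suppose some member of $\cal C$ is not perfectly weight divisible, and let $G$ be a vertex-minimal such graph. By Theorem~\ref{thm:hsd}, a minimal non-perfectly-weight-divisible graph admits no homogeneous set decomposition, so $G$ has no homogeneous set; in particular $G$ is connected.

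Next I would invoke Theorem~\ref{lem:FBull-free}: since $G$ is a (fork,\,bull)-free graph with no homogeneous set decomposition, for every vertex $v$ of $G$ the graph $G[M(v)]$ is odd-hole-free and $\overline{P_5}$-free, hence perfect. This supplies exactly the hypothesis needed to run the argument in the proof of Theorem~\ref{pwd=pd}. Concretely, given a nonnegative integer weight function $w$ on $V(G)$, set $U=\{v\in V(G):w(v)>0\}$ and $H=G[U]$. If $U\neq V(G)$, then $H$ is a smaller member of $\cal C$, hence perfectly weight divisible by minimality of $G$, and one extends its witnessing partition by placing all zero-weight vertices into the second part (these contribute no weight to any clique). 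If $U=V(G)$, pick any vertex $v$ of $G$, put $S=M(v)\cup\{v\}$ and $T=N(v)$; then $G[S]$ is $G[M(v)]$ together with an isolated vertex, hence perfect, and since $w(v)>0$ the maximum weight of a clique in $G[T]$ is strictly smaller than that of a clique in $G$. In either case $G$ is perfectly weight divisible, a contradiction. Perfect divisibility then follows (e.g.\ by using the all-ones weight function, exactly as in Theorem~\ref{pwd=pd}).

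I do not expect a genuine obstacle here: all the real work is packaged inside Theorem~\ref{lem:FBull-free}, and the reduction to perfect weight divisibility is the same routine bookkeeping already carried out in the proof of Theorem~\ref{pwd=pd} and in Corollary~\ref{FCDfreeStruc-2}. The one point worth being careful about is that Theorem~\ref{pwd=pd} is phrased for a hereditary class \emph{all} of whose members possess a vertex $v$ with $G[M(v)]$ perfect, whereas Theorem~\ref{lem:FBull-free} guarantees such a vertex only for the homogeneous-set-free members of $\cal C$; the minimal-counterexample wrapper described above is precisely what bridges that gap, so in a fully written proof that is the step I would spell out rather than merely cite.
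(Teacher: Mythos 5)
Your proposal is correct and is essentially the paper's own argument: the paper proves this corollary by citing Theorems~\ref{thm:hsd}, \ref{pwd=pd} and \ref{lem:FBull-free}, exactly the combination you use (and your minimal-counterexample wrapper is the same bookkeeping the paper carries out explicitly for the co-dart case in Corollary~\ref{FCDfreeStruc-2}). Your closing remark about bridging the hypothesis of Theorem~\ref{pwd=pd} with the homogeneous-set-free conclusion of Theorem~\ref{lem:FBull-free} is a fair point of care, but it is not a departure from the paper's route.
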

\begin{proof}This follows from Theorems~\ref{thm:hsd} and \ref{pwd=pd}, and from Theorem~\ref{lem:FBull-free}. \end{proof}

\begin{cor}\label{thm:forkbull}
Let $G$ be a (fork, bull)-free graph. Then $\chi(G) \leq  \binom{\omega(G) + 1}{2}$.
\end{cor}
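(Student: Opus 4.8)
The plan is to derive this bound as an immediate consequence of the structural results already in place, so there is essentially no new work to do here; the content has been packaged into the preceding corollaries. First I would observe that Corollary~\ref{FBullfreeStruc-2} already asserts that every (fork,\,bull)-free graph $G$ is perfectly weight divisible, and in particular perfectly divisible. That corollary is itself obtained by combining Theorem~\ref{lem:FBull-free} of \cite{KM} (which gives, for any (fork,\,bull)-free $G$ not admitting a homogeneous set decomposition, a vertex $v$ with $G[M(v)]$ perfect) with Theorem~\ref{thm:hsd} (a minimal non-perfectly-weight-divisible graph has no homogeneous set decomposition) and Theorem~\ref{pwd=pd} (the hereditary-class mechanism turning ``some vertex $v$ has $G[M(v)]$ perfect'' into perfect weight divisibility).

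Second I would simply apply Lemma~\ref{lem:pd-bound}, which states that every perfectly divisible graph $G$ satisfies $\chi(G)\le\binom{\omega(G)+1}{2}$. Chaining these two facts gives the claimed inequality for every (fork,\,bull)-free graph. Concretely, the proof is the single line: this follows from Corollary~\ref{FBullfreeStruc-2} and Lemma~\ref{lem:pd-bound}.

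There is no real obstacle at this stage. If one wanted to be self-contained and not lean on \cite{KM}, the hard part would be reproving Theorem~\ref{lem:FBull-free}, i.e.\ showing that a (fork,\,bull)-free graph with no homogeneous set decomposition has, for every vertex $v$, that $G[M(v)]$ is odd-hole-free and $\overline{P_5}$-free (hence perfect by the strong perfect graph theorem, Theorem~\ref{thm:spgt}); this would run along the same lines as the arguments given above for the (fork, dart)-free and (fork, co-cricket)-free cases, analyzing how vertices of $N(v)$ and $M(v)$ attach to a shortest odd hole or to a $\overline{P_5}$ in $G[M(v)]$ and forcing a homogeneous set. But since Theorem~\ref{lem:FBull-free} is available to be cited, the corollary requires nothing beyond the two-step invocation described.

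\begin{proof}
This follows from Corollary~\ref{FBullfreeStruc-2} and Lemma~\ref{lem:pd-bound}.
\end{proof}
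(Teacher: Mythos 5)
Your proposal is correct and coincides exactly with the paper's own proof: the paper also deduces the bound in one line from Corollary~\ref{FBullfreeStruc-2} (perfect divisibility of (fork,\,bull)-free graphs) together with Lemma~\ref{lem:pd-bound}. No gaps; the additional discussion of how Corollary~\ref{FBullfreeStruc-2} itself is obtained is accurate but not needed here.
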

\begin{proof}This follows from Corollary~\ref{FBullfreeStruc-2} and Lemma~\ref{lem:pd-bound}. \end{proof}

  \section{Concluding remarks and open problems}
  We have studied  the structure  of (fork,\,$F$)-free graphs in the context of perfect divisibility, where $F$ is some graph on five vertices  with a stable set of size 3, and obtained   quadratic $\chi$-binding functions for such classes of graphs. Recall that if $|V(F)|\leq 4$, then the class of (fork,\,$F$)-free graphs is known to be quadratically $\chi$-bounded except when $F=4K_1$.

\begin{problem}
What is the smallest $\chi$-binding function for the class of (fork, $4K_1$)-free graphs?
\end{problem}

Further, it will be interesting to study  $\chi$-binding functions for the class of (fork,\,$F$)-free graphs, where $F$ is a graph on five vertices with stable sets of size at most 2, in particular, for the class of (fork, $C_5$)-free graphs and for the class of (fork, $\overline{P_5}$)-free graphs.

\medskip

The notion of perfect divisibility played a key role in proving quadratic $\chi$-binding functions for some classes of fork-free graphs.  In this paper, we showed that class of (fork, $F$)-free graphs is perfectly divisible, where $F\in \{P_6, \text{co-dart}, \text{bull}\}$, and the class of  (fork, $H$)-free graphs is either claw-free or perfectly divisible, when $F\in$ \{dart, banner, co-cricket\}. Indeed, the third author conjectured the following.

\begin{conjecture}
Every fork-free graph is perfectly divisible.
\end{conjecture}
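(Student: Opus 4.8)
The plan is to push the template of Section~3 as far as it will go, aiming for the stronger statement that every fork-free graph is perfectly weight divisible. Take a minimal counterexample $G$; every proper induced subgraph of $G$ is then perfectly weight divisible, and by Theorem~\ref{thm:hsd} the graph $G$ admits no homogeneous set decomposition. Mimicking the proof of Theorem~\ref{pwd=pd}, it suffices to exhibit a single vertex $v$ of $G$ with $G[M(v)]$ perfect: given such a $v$ and a nonnegative integer weight function $w$, set $S=\{v\}\cup M(v)$ and $T=N(v)$; then $G[S]=G[M(v)]\cup K_1$ is perfect, and if $w(v)>0$ the maximum weight of a clique in $G[N(v)]$ is strictly less than that in $G$ (adjoin $v$ to any clique of $N(v)$), while if some vertex has weight $0$ one instead restricts to the subgraph induced on the positively weighted vertices, which is perfectly weight divisible by minimality. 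Hence the whole problem reduces to the following: \emph{every fork-free graph with no homogeneous set has a vertex $v$ such that $G[M(v)]$ is perfect.}

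For this I would argue by contradiction, assuming $G[M(v)]$ is imperfect for every $v$, so by Theorem~\ref{thm:spgt} it contains an odd hole or odd antihole. The first task is to handle odd antiholes: since the neighbourhood of a vertex in $\overline{C_n}$ induces $\overline{P_{n-3}}$, which has no triad, odd antiholes are claw-free and hence fork-free, so unlike in the co-dart and co-cricket cases one cannot exclude them via paw-freeness of $M(v)$. Instead I would analyse how a vertex of $G$ attaches to an odd antihole sitting inside some $G[M(v)]$ and try to extract either a fork, a homogeneous set, or an odd hole, reducing to the case that every $G[M(v)]$ contains an odd hole. Then, fixing $v$ and a \emph{shortest} odd hole $C$ in $G[M(v)]$ (so $v$ is an anticenter of $C$), I would prove the fork-free analogues of Claims~\ref{NAX0}, \ref{A-empty} and \ref{clm:Cneigh}: classify $N(w)\cap V(C)$ for every vertex $w$ with a neighbour in $C$, show that these attachments are mutually compatible, and conclude --- exactly as in the proofs of Theorems~\ref{FP6freeStruc} and \ref{thm:fork-pawK1-free} --- that $V(C)$, or the maximal set grown from $V(C)$ by repeatedly adding a vertex with both a neighbour and a non-neighbour in the current set, is a homogeneous set, a contradiction.

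The step I expect to be the main obstacle is precisely this classification of attachments to a shortest odd hole. In every class treated in Section~3 the classification is forced either by a bound on hole length (the $P_6$-free case) or by a second forbidden five-vertex graph; with only the fork excluded there seems to be no way to rule out a vertex with two non-adjacent neighbours on a long hole, and such vertices break the homogeneous-set construction. A natural countermeasure is to choose $v$ more carefully --- maximizing $\alpha(G[N(v)])$ as in Theorem~\ref{FBfreeStruc}, or maximizing $|M(v)|$, or choosing $C$ and $v$ jointly --- but I do not see how to make any of these go through in full generality, and it is quite possible that the conjecture genuinely requires a decomposition theorem for fork-free graphs in the spirit of the Chudnovsky--Seymour structure theorem for claw-free graphs \cite{CS}, which is not currently available. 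That, in my view, is why the statement is still only a conjecture.
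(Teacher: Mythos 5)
This statement is stated in the paper as an open conjecture; the paper offers no proof of it, and your proposal does not supply one either, as you yourself acknowledge at the end. Your reduction step is sound and faithfully mirrors the paper's machinery: a minimal non-perfectly-weight-divisible graph has no homogeneous set by Theorem~\ref{thm:hsd}, and (handling zero-weight vertices by restricting to the positively weighted part, exactly as in the proof of Theorem~\ref{pwd=pd}) it suffices to find one vertex $v$ with $G[M(v)]$ perfect. This is precisely the template the paper runs in Section~3 for each pair (fork,\,$F$). But the entire content of the conjecture lies in the step you leave open, and there the gap is real: in every case the paper handles, the classification of attachments to an odd hole or antihole in $G[M(v)]$ is forced by the \emph{second} excluded graph ($P_6$ bounds hole length; dart, banner, co-dart, co-cricket, bull each kill specific attachment patterns), and with only the fork excluded nothing prevents a vertex from having scattered, pairwise non-adjacent neighbours on a long hole, which is exactly what destroys the homogeneous-set growth argument of Theorem~\ref{FP6freeStruc}. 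Moreover, your intermediate target --- that every fork-free graph with no homogeneous set has a vertex whose non-neighbourhood induces a perfect graph --- is a strictly stronger structural assertion than perfect divisibility itself, and you give no evidence it holds; it could fail even if the conjecture is true, in which case the whole reduction would be aimed at the wrong statement.

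So the honest summary is: you have correctly reduced the conjecture to a concrete structural question and correctly diagnosed why the paper's techniques do not settle it (your remark that a Chudnovsky--Seymour-style decomposition for fork-free graphs, in the spirit of \cite{CS}, may be needed is consistent with the paper's own concluding discussion), but no proof has been produced, and none exists in the paper to compare against. As a referee I would record this as a gap --- the key lemma is missing, not merely technical --- while noting that your write-up is a reasonable research plan rather than an erroneous argument.
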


The above conjecture is not even known to be true for a very special subclass of fork-free graphs, namely claw-free graphs. It is conceivable that the proof of perfect divisibility for claw-free graphs could be based on the detailed structure theorem for such graphs due to Chudnovsky and Seymour \cite{CS}.  However, the proof of perfect divisibility for a subclass of claw-free graphs, namely, the class of line graphs seems to be easy as given below.

\begin{proposition}
Every line graph is perfectly divisible.
\end{proposition}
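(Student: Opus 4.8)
The plan is to show that the line graph $L(H)$ of any graph $H$ is perfectly divisible by exhibiting, for every induced subgraph, an explicit partition of its vertex set into a perfect part $A$ and a part $B$ of strictly smaller clique number. Since the class of line graphs is hereditary, it suffices (by the definition of perfect divisibility) to produce such a partition for $L(H)$ itself; the same argument applied to the graph whose line graph is the induced subgraph in question handles all induced subgraphs. So fix a graph $H$ and write $G=L(H)$, with $\omega(G)=\omega$. Recall that cliques in $L(H)$ come in two flavours: the ``star'' cliques, consisting of all edges of $H$ through a fixed vertex, and the ``triangle'' clique, when $H$ contains a triangle; this is the classical Whitney/Krausz picture, and it controls $\omega(G)$.

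The main step is the choice of the set $B$. I would let $B$ be the edge set of a suitably chosen subgraph of $H$ — concretely, take a vertex $u$ of $H$ of maximum degree and let $B=N_G[e_0]$ fail to be small enough in general, so instead I would proceed as follows: partition $V(H)$ greedily and take $B$ to be the set of edges incident with the vertices of maximum degree, or more cleanly, observe that a maximum clique in $L(H)$ that is a star clique is realized at a vertex of degree $\omega$ (when $\omega\ge 3$), and peel off one such vertex. Precisely: pick $u\in V(H)$ with $\deg_H(u)=\omega$ if such a vertex exists (the case where the maximum clique of $G$ is a triangle, i.e.\ $\omega=3$ and $\Delta(H)\le 2$, is handled separately and is easy since then $H$ is a disjoint union of paths and cycles, so $L(H)$ is a disjoint union of paths and cycles and is itself perfectly divisible directly). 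Set $A=V(G)\setminus B$ where $B$ is the set of edges of $H$ incident to $u$. Then $G[A]=L(H-u)$ has $\omega(G[A])\le\omega$, which is not yet a decrease — so this naive choice does not work either.

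The correct and standard move, which I expect to be the crux, is: $A$ should be the set of edges of a bipartite subgraph of $H$, because line graphs of bipartite graphs are perfect (indeed, $L(H)$ is perfect iff $H$ has no odd cycle other than triangles — König/Rook's-graph theory, or directly: $L(H)$ for bipartite $H$ is perfect by König's edge-colouring theorem). So I would take a maximum-cut bipartition of $H$, let $A$ be the set of edges crossing the cut — then $H[A]$ is bipartite, hence $G[A]=L(H[A])$ is perfect — and let $B$ be the remaining edges, i.e.\ the edges inside the two sides. The point is then to argue $\omega(G[B])<\omega(G)$: every clique of $G$ is either a triangle of $H$ or a star at a vertex $v$; a triangle always has an edge crossing any maximum cut (a triangle cannot be monochromatic in a max-cut bipartition since recolouring fixes it), so $G[B]$ is triangle-clique-free; and a star at $v$ that survives into $B$ uses only edges inside $v$'s own side, of which there are at most $\deg_H(v)/2 \le \omega/2 < \omega$ by the max-cut property (each vertex has at least half its edges crossing). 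Hence $\omega(G[B])<\omega(G)$ whenever $\omega\ge 2$, and the $\omega\le 1$ case is trivial. The hard part is getting the clique-in-$B$ bound exactly right and cleanly separating the triangle-clique case from the star-clique case; once the two-flavours description of cliques in a line graph is in hand, the max-cut argument closes everything.
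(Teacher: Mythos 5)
Your route is genuinely different from the paper's. The paper works component by component: it takes a spanning tree $T$ of a connected non-tree graph $H$, puts $A=E(T)$ (the line graph of a tree is perfect) and $B=E(H)\setminus E(T)$, and asserts that the clique number drops on $B$. You instead take a maximum cut of $H$, put $A=$ crossing edges (so $G[A]$ is the line graph of a bipartite graph, perfect by K\"onig) and $B=$ the edges inside the two sides, and use the local max-cut property (each vertex has at least half of its edges crossing, else flipping it improves the cut) to bound star cliques in $B$ by $\lfloor \deg_H(v)/2\rfloor\le\lfloor\omega/2\rfloor<\omega$. Your reduction to the single graph $L(H)$ is also fine, since induced subgraphs of line graphs are again line graphs. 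So the skeleton is sound and arguably more quantitative than the paper's spanning-tree decomposition.

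However, one of your two key claims is false as stated: it is not true that a triangle of $H$ always has an edge crossing a maximum cut. Take $H=K_5$: the maximum cut is the $2$--$3$ split, and the triangle inside the $3$-side is monochromatic. So you cannot conclude that $G[B]$ has no triangle-type cliques. The argument is easily repaired, and the repair is exactly the ``clean separation'' you flagged as the hard part: if a triangle $xyz$ is monochromatic in a maximum cut, then each of $x,y,z$ has at least two edges inside its own side, so by the flip condition it has at least two crossing edges as well, hence $\deg_H(x)\ge 4$ and therefore $\omega(G)\ge\Delta(H)\ge 4$; a surviving triangle clique then has size $3<\omega(G)$ and is harmless. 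Combined with the star bound $\lfloor\omega/2\rfloor<\omega$, this gives $\omega(G[B])<\omega(G)$ in all cases with $\omega\ge 1$ (the case $\omega\le 1$ being trivial), and your proof goes through. Without this correction, the step ``$G[B]$ is triangle-clique-free'' is a genuine gap.
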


\begin{proof}
Let $G$ be a connected graph. If $G$ is a tree, then $L(G)$ is perfect, and hence perfectly divisible. Hence we may assume that $G$ is not a tree. Let $T$ be a spanning tree of $G$. Consider the partition of $E(G) =   E(T)\cup (E(G) \setminus E(T))$. In $L(G)$, $E(T)$ induces a perfect graph since it is the line graph of a tree. Also, the clique number of $L(G) - E(T)$ is smaller than that of $L(G)$. Hence $L(G)$ is perfectly divisible. Since perfect divisibility is preserved under disjoint union, we are done.
\end{proof}

It is well known that the complement of a perfect graph is perfect \cite{LL}. How about perfectly divisible graphs? The following proposition shows that it fails badly.

\begin{proposition}
The class of graphs whose complements are perfectly divisible is not $\chi$-bounded.
\end{proposition}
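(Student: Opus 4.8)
The goal is to show that the class of graphs whose complements are perfectly divisible is not $\chi$-bounded. The plan is to exhibit, for every $k$, a graph $G_k$ with $\omega(G_k)$ bounded (ideally constant) but $\chi(G_k)$ unbounded, such that $\overline{G_k}$ is perfectly divisible. The natural place to look for unbounded chromatic number with small clique number is the class of triangle-free graphs with large chromatic number (Mycielski graphs, shift graphs, or Kneser-type graphs), or more simply the family of graphs $G$ that are disjoint unions of large cliques together with a triangle-free graph of large chromatic number. So first I would recall that there exist triangle-free graphs $H_k$ with $\chi(H_k) \geq k$; equivalently their complements $\overline{H_k}$ have independence number $\alpha(\overline{H_k}) = \omega(H_k) \leq 2$ and no stable set of size $3$, i.e. $\overline{H_k}$ is $3K_1$-free.

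Now the key observation: by Ho\'ang's result (mentioned in the introduction, and recovered here as a special case of Corollary~\ref{P3uK1-pd}), every $3K_1$-free graph is perfectly divisible. Hence every $\overline{H_k}$ is perfectly divisible. But $\chi(H_k)$ is unbounded while $\omega(H_k) \leq 2$ is bounded — wait, I must be careful: $H_k$ being triangle-free means $\omega(H_k) = 2$, so $H_k$ itself already witnesses that the class $\{G : \overline{G} \text{ is perfectly divisible}\}$ is not $\chi$-bounded, since this class contains all complements of $3K_1$-free graphs, i.e. all graphs whose complement is $3K_1$-free, and $3K_1$-free is equivalent to $\alpha \leq 2$, so the class contains all triangle-free graphs. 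Therefore I would simply argue: the class in question contains every triangle-free graph (because the complement of a triangle-free graph has no $3K_1$, hence is perfectly divisible by Ho\'ang / Corollary~\ref{P3uK1-pd}), and since there are triangle-free graphs of arbitrarily large chromatic number (e.g. Mycielski's construction), the class is not $\chi$-bounded.

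Concretely, the steps in order: (1) State that for every positive integer $k$ there is a triangle-free graph $H_k$ with $\chi(H_k) \geq k$ (cite Mycielski or the classical existence result). (2) Observe $\overline{H_k}$ contains no $3K_1$ (a stable set of size $3$ in $\overline{H_k}$ would be a triangle in $H_k$). (3) Invoke Ho\'ang's theorem that $3K_1$-free graphs are perfectly divisible (or Corollary~\ref{P3uK1-pd}, since $3K_1$-free implies $P_3\cup K_1$-free). (4) Conclude $H_k$ lies in the class of graphs whose complement is perfectly divisible, while $\chi(H_k)$ is unbounded (and $\omega(H_k)=2$), so the class is not $\chi$-bounded — in fact not even $\chi$-bounded restricted to clique number $2$.

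There is essentially no obstacle here; the only subtlety is making sure the reduction "$\overline{G}$ perfectly divisible" is applied to the right graph — one wants $G = H_k$ triangle-free, so that $\overline{G}$ is $3K_1$-free and the known perfect-divisibility result applies to $\overline{G}$. The "hard part," such as it is, is purely bookkeeping: confirming that $3K_1$-freeness (equivalently $\alpha \le 2$) is what guarantees perfect divisibility, and that triangle-free graphs of unbounded $\chi$ exist — both of which are standard and already cited in the paper. I would write the proof in four or five lines, citing \cite{Hoang} (or Corollary~\ref{P3uK1-pd}) and a reference for Mycielski graphs.

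\begin{proof}
For every positive integer $k$, by a classical construction of Mycielski, there is a triangle-free graph $H_k$ with $\chi(H_k)\ge k$. Then $\overline{H_k}$ contains no $3K_1$, since a stable set of size $3$ in $\overline{H_k}$ would be a triangle in $H_k$. By a result of Ho\'ang \cite{Hoang} (or by Corollary~\ref{P3uK1-pd}, as every $3K_1$-free graph is $P_3\cup K_1$-free), $\overline{H_k}$ is perfectly divisible. Hence $H_k$ belongs to the class of graphs whose complements are perfectly divisible, yet $\omega(H_k)=2$ and $\chi(H_k)\ge k$ is unbounded. Therefore this class is not $\chi$-bounded.
\end{proof}
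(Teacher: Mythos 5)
Your proof is correct and follows exactly the paper's own argument: triangle-free graphs have complements that are $3K_1$-free, hence perfectly divisible by Ho\'ang's result, and triangle-free graphs have unbounded chromatic number while $\omega\le 2$. The only difference is cosmetic (you name Mycielski's construction explicitly), so there is nothing to add.
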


\begin{proof}
We know that the class of $3K_1$-free graphs is perfectly divisible \cite{Hoang}. Hence the class of triangle-free graphs is contained in the class of graphs whose complements are perfectly divisible. It is well known that the class of triangle-free graphs have unbounded chromatic number, establishing the result.
\end{proof}

But what about graphs $G$ such that both $G$ and its complement $\overline{G}$ are perfectly divisible?

\begin{problem}
What is the smallest $\chi$-binding function for  the class of graphs $\cal G$ such that for each $G\in \cal G$,  both $G$ and  $\overline{G}$ are perfectly divisible?
\end{problem}

 Not much is known about the class of perfectly divisible graphs in general.  Perhaps determining graphs that are not in the class and minimal with respect to that property will shed light on this.

\begin{problem}
Determine the set of forbidden induced subgraphs for the class of perfectly divisible graphs.
\end{problem}


\medskip

\no{\bf Acknowledgement}
The authors would like to thank Maria Chudnovsky for participating in the initial stages of this paper.

{\small

}
\end{document}